\newtheorem{theorem}{Theorem}[section]
\newtheorem{corollary}{Corollary}
\newtheorem{lemma}[theorem]{Lemma}
\theoremstyle{definition}
\newtheorem{definition}[theorem]{Definition}
\title[Infinite horizon variational problems on time scales]{Necessary optimality conditions
for infinite horizon variational problems on time scales}
\author[M. Dryl and D. F. M. Torres]{}
\subjclass{Primary: 49K05; Secondary: 34N05}
\keywords{Time scales, Calculus of variations, Euler--Lagrange equations,
Transversality conditions, Infinite horizon}
\email{monikadryl@ua.pt}
\email{delfim@ua.pt}
\thanks{Part of first author's Ph.D., which is carried out
at the University of Aveiro under the
\emph{Doctoral Programme in Mathematics and Applications}
of Universities of Aveiro and Minho.}
\begin{document}

\maketitle

\centerline{\scshape Monika Dryl}
\medskip
{\footnotesize
 \centerline{Center for Research and Development in Mathematics and Applications}
 \centerline{Department of Mathematics, University of Aveiro, 3810-193 Aveiro, Portugal}
}

\medskip

\centerline{\scshape Delfim F. M. Torres}
\medskip
{\footnotesize
 \centerline{Center for Research and Development in Mathematics and Applications}
 \centerline{Department of Mathematics, University of Aveiro, 3810-193 Aveiro, Portugal}}


\bigskip

\centerline{To George Leitmann on the occasion of his 87th birthday.}


\begin{abstract}
We prove Euler--Lagrange type equations and transversality conditions
for generalized infinite horizon problems of the calculus of variations
on time scales. Here the Lagrangian depends on the independent
variable, an unknown function and its nabla derivative,
as well as a nabla indefinite integral
that depends on the unknown function.
\end{abstract}


\section{Introduction}

In recent years, it has been shown that the behavior
of many systems is described more accurately
using dynamic equations on a time scale or a measure chain
\cite{BohnerDEOTS,MBbook2001,LM:ts}.
If the variational principle holds as a unified law \cite{book:Leitmann},
then the above time scale differential equations must also
come from minimization of some delta or nabla functional
with a Lagrangian containing delta or nabla derivative terms
\cite{MyID:138,comNaty:AML,naty:irlanda}.
Here we consider the following infinite horizon variational problem:
\begin{equation}
\label{rown z introduction}
\mathcal{J}(x)=\int\limits_{a}^{\infty}
L\left(t,x^{\rho}(t),x^{\nabla}(t),z(t)\right)\nabla t \longrightarrow \textrm{extr},
\end{equation}
where ``extr'' means ``minimize'' or ``maximize''. The variable $z(t)$ is defined by
\begin{equation*}
z(t)=\int\limits_{a}^{t} g\left(\tau, x^{\rho}(\tau),x^{\nabla}(\tau)\right)\nabla\tau.
\end{equation*}
Integral \eqref{rown z introduction} does not necessarily converge, being possible
to diverge to plus or minus infinity or oscillate.
Problem \eqref{rown z introduction} generalizes the ones
recently studied in \cite{AM-NM-DT,Generalizing_the_variational_theory}.

The paper is organized as follows. In Section~\ref{sec:prelim}
we collect the necessary definitions and results
of the nabla calculus on time scales, which are necessary in the sequel.
In Section~\ref{sec:mr} we state and prove the new results:
we prove necessary optimality conditions to problem \eqref{rown z introduction},
obtaining Euler--Lagrange type equations in the class of functions
$x\in C_{ld}^{1}\left(\mathbb{T},\mathbb{R}^n\right)$
and new transversality conditions
(Theorems~\ref{maintheorem} and \ref{secondmaintheorem}).


\section{Preliminaries}
\label{sec:prelim}

In this section we introduce basic definitions
and theorems that are needed in Section~\ref{sec:mr}.
For more on the time scale theory we refer the reader
to \cite{BohnerDEOTS,MBbook2001,TorresDeltaNabla}.
A time scale $\mathbb{T}$ is an arbitrary nonempty closed subset of $\mathbb{R}$.
All the intervals in this paper are time scale intervals
with respect to a given time scale $\mathbb{T}$
(for example, by $[a,b]$ we mean $[a,b] \cap \mathbb{T}$).

\begin{definition}[\textrm{e.g.}, Section 2.1 of \cite{TorresDeltaNabla}]
The forward jump operator $\sigma:\mathbb{T} \rightarrow \mathbb{T}$
is defined by $\sigma(t):=\inf\{s \in \mathbb{T}:s>t\}$ for $t\neq\sup \mathbb{T}$
and $\sigma(\sup\mathbb{T})=\sup\mathbb{T}$ if $\sup\mathbb{T}<+\infty$.
Similarly, the backward jump operator $\rho:\mathbb{T}\rightarrow \mathbb{T}$
is defined by $\rho(t):=\sup\lbrace s\in\mathbb{T}: s<t\rbrace$
for $t\neq \inf\mathbb{T}$ and $\rho(\inf\mathbb{T})=\inf\mathbb{T}$
if $\inf\mathbb{T}>-\infty$.
\end{definition}

\begin{definition}[\textrm{e.g.}, Section 2.1 of \cite{TorresDeltaNabla}]
The backward graininess function
$\nu:\mathbb{T} \rightarrow [0,\infty)$
is defined by $\nu(t):=t-\rho(t)$.
\end{definition}

A point $t \in \mathbb{T}$ is called right-dense, right-scattered,
left-dense or left-scattered if $\sigma(t) = t$, $\sigma(t) > t$,
$\rho(t) = t$, $\rho(t) < t$, respectively. We say that $t$ is isolated if
$\rho(t) < t < \sigma(t)$, that $t$ is dense if $\rho(t) = t = \sigma(t)$.
If $\mathbb{T}$ has a right-scattered minimum $m$, then
define $\mathbb{T}_{\kappa}:=\mathbb{T}-\lbrace m\rbrace$;
otherwise, set $\mathbb{T}_{\kappa}:=\mathbb{T}$.
To simplify the notation, let $f^{\rho}(t):=f(\rho(t))$.

\begin{definition}[\textrm{e.g.}, Section 2.2 of \cite{TorresDeltaNabla}]
We say that function $f:\mathbb{T}\rightarrow\mathbb{R}$
is nabla differentiable at $t\in\mathbb{T}_{\kappa}$ if there is
a number $f^{\nabla}(t)$ such that for all $\epsilon >0$ there exists
a neighborhood $U$ of $t$ such that
\begin{equation*}
|f^{\rho}(t)-f(s)-f^{\nabla}(t)(\rho(t)-s)|
\leq \epsilon |\rho(t)-s| \mbox{ for all } s \in U.
\end{equation*}
We call $f^{\nabla}(t)$ the nabla derivative of $f$ at $t$. Moreover,
$f$ is nabla differentiable on $\mathbb{T}$ provided $f^{\nabla}(t)$
exists for all $t\in\mathbb{T}_{\kappa}$.
\end{definition}

\begin{theorem}[\textrm{e.g.}, Theorem 8.41 of \cite{BohnerDEOTS}]
\label{tw:differprop}
Assume $f,g:\mathbb{T}\rightarrow\mathbb{R}$
are nabla differentiable at $t\in\mathbb{T_{\kappa}}$. Then:
\begin{enumerate}

\item The sum $f+g:\mathbb{T}\rightarrow\mathbb{R}$
is nabla differentiable at $t$ with
\begin{equation*}
(f+g)^{\nabla}(t)=f^{\nabla}(t)+g^{\nabla}(t).
\end{equation*}

\item For any constant $\alpha$, $\alpha f:\mathbb{T}\rightarrow\mathbb{R}$
is nabla differentiable at $t$ and
\begin{equation*}
(\alpha f)^{\nabla}(t)=\alpha f^{\nabla}(t).
\end{equation*}

\item The product $fg:\mathbb{T}\rightarrow\mathbb{R}$
is nabla differentiable at $t$ and the following product rules hold:
\begin{equation*}
(fg)^{\nabla}(t)=f^{\nabla}(t)g(t)+f^{\rho}g^{\nabla}(t)
= f^{\nabla}(t)g^{\rho}(t) + f(t)g^{\nabla}(t).
\end{equation*}

\item If $g(t)g^{\rho}(t)\neq 0$, then $f/g$
is nabla differentiable at $t$ and the following quotient rule hold:
\begin{equation*}
\bigg(\frac{f}{g}\bigg)^{\nabla}(t)
=\frac{f^{\nabla}(t)g(t)-f(t)g^{\nabla}(t)}{g(t)g^{\rho}(t)}.
\end{equation*}
\end{enumerate}
\end{theorem}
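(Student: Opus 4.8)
The plan is to derive every assertion directly from the definition of the nabla derivative, reducing each to an $\epsilon$-estimate on a suitable neighborhood $U$ of $t$. Two preliminary facts, both obtained from the definition itself, will carry most of the weight. First, the identity
\[
f^{\rho}(t)=f(t)-\nu(t)f^{\nabla}(t),
\]
valid whenever $f$ is nabla differentiable at $t$: putting $s=t$ in the defining inequality (any neighborhood of $t$ contains $t$) gives $|f^{\rho}(t)-f(t)-f^{\nabla}(t)(\rho(t)-t)|\le\epsilon\,\nu(t)$ for every $\epsilon>0$, so the bracket vanishes. Second, nabla differentiability at $t$ forces $f$ to be continuous at $t$, i.e. $f(s)\to f(t)$ as $s\to t$; this follows by rewriting $f(t)-f(s)$ using the identity above and letting the neighborhood shrink, so that the free parameter $\epsilon$ can be driven to $0$. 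Continuity in turn gives a local bound on $f$ near $t$.

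For parts (1) and (2) the argument is immediate. Given $\epsilon>0$, I would choose neighborhoods on which the defining inequality for $f$ and for $g$ holds with $\epsilon/2$ (respectively with $\epsilon/|\alpha|$ for the second statement), intersect them, and apply the triangle inequality; the candidates $f^{\nabla}(t)+g^{\nabla}(t)$ and $\alpha f^{\nabla}(t)$ then satisfy the definition verbatim.

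The product rule (3) is the crux. Writing $(fg)^{\rho}(t)=f^{\rho}(t)g^{\rho}(t)$ and adding and subtracting $f^{\rho}(t)g(s)$, I would split
\[
f^{\rho}(t)g^{\rho}(t)-f(s)g(s)=f^{\rho}(t)\big[g^{\rho}(t)-g(s)\big]+g(s)\big[f^{\rho}(t)-f(s)\big].
\]
On a common neighborhood the bracketed differences equal $g^{\nabla}(t)(\rho(t)-s)$ and $f^{\nabla}(t)(\rho(t)-s)$ up to errors $E_g,E_f$ with $|E_g|,|E_f|\le\epsilon^{*}|\rho(t)-s|$. Substituting and subtracting the candidate $\big[f^{\nabla}(t)g(t)+f^{\rho}(t)g^{\nabla}(t)\big](\rho(t)-s)$, the $f^{\rho}(t)g^{\nabla}(t)(\rho(t)-s)$ terms cancel and what remains is
\[
f^{\nabla}(t)(\rho(t)-s)\big[g(s)-g(t)\big]+f^{\rho}(t)E_g+g(s)E_f,
\]
which I would bound term by term: the last two summands are controlled by the local bound on $g$ and $|f^{\rho}(t)|$ times $\epsilon^{*}|\rho(t)-s|$, while the first is handled precisely by the continuity of $g$ at $t$, which lets me shrink the neighborhood so that $|g(s)-g(t)|\le\epsilon^{*}$. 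Choosing $\epsilon^{*}$ small in terms of $\epsilon$ and the local bounds closes the estimate and yields $(fg)^{\nabla}(t)=f^{\nabla}(t)g(t)+f^{\rho}(t)g^{\nabla}(t)$. The second form is then free of any further estimate: substituting $h^{\rho}=h-\nu h^{\nabla}$ for $h=f$ and $h=g$ shows both expressions equal $f^{\nabla}(t)g(t)+f(t)g^{\nabla}(t)-\nu(t)f^{\nabla}(t)g^{\nabla}(t)$.

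For the quotient rule (4), under $g(t)g^{\rho}(t)\neq0$, I would first establish the reciprocal rule $(1/g)^{\nabla}(t)=-g^{\nabla}(t)/\big(g(t)g^{\rho}(t)\big)$ from the definition, using that $g$ is bounded away from $0$ near $t$; this simultaneously shows $1/g$, and hence $f/g=f\cdot(1/g)$, is nabla differentiable. The formula then follows by applying the product rule in the form $(uv)^{\nabla}=u^{\nabla}v^{\rho}+uv^{\nabla}$ to the identity $f=(f/g)g$ with $u=f/g$, $v=g$ and solving for $(f/g)^{\nabla}(t)$, which simplifies to $(f^{\nabla}g-fg^{\nabla})/(gg^{\rho})$. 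I expect the product rule to be the only real obstacle: keeping the $\rho$-shifts on the correct factors, splitting the $\epsilon$-budget, and absorbing the genuinely non-negligible cross term via continuity is where all the care lies; once (3) is secured, (1), (2), and (4) are routine.
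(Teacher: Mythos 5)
Your proof is correct, but there is nothing in the paper to compare it against: the paper does not prove this statement, it simply quotes it as a known preliminary result (Theorem 8.41 of Bohner--Peterson \cite{BohnerDEOTS}). Your argument --- first extracting the identity $f^{\rho}(t)=f(t)-\nu(t)f^{\nabla}(t)$ and continuity at $t$ from the definition, then running the add-and-subtract $\epsilon$-estimate for the product rule and bootstrapping the quotient rule from the reciprocal rule plus the identity $f=(f/g)\,g$ --- is precisely the standard textbook proof, namely the nabla analogue of the delta-derivative argument in that reference, and all the key estimates (the cancellation leaving $f^{\nabla}(t)(\rho(t)-s)\bigl[g(s)-g(t)\bigr]+f^{\rho}(t)E_g+g(s)E_f$, the use of local boundedness, and the equivalence of the two product-rule forms) check out. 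The only pedantic omission is the trivial case $\alpha=0$ in part (2), where one cannot split the $\epsilon$-budget by dividing by $|\alpha|$; that case is immediate since $(\alpha f)^{\rho}(t)-(\alpha f)(s)\equiv 0$.
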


\begin{definition}[\textrm{e.g.}, Section~3.1 of \cite{MBbook2001}]
Let $\mathbb{T}$ be a time scale, $f:\mathbb{T} \rightarrow \mathbb{R}$.
We say that function $f$ is ld-continuous if it is continuous at left-dense
points and its right-sided limits exist (finite) at all right-dense points.
\end{definition}

\begin{definition}[\textrm{e.g.}, Definition~9 of \cite{TorresDeltaNabla}]
A function $F : \mathbb{T} \rightarrow \mathbb{R}$
is called a nabla antiderivative of $f : \mathbb{T} \rightarrow \mathbb{R}$
provided $F^\nabla(t) = f(t)$ for all $t \in \mathbb{T}_\kappa$.
In this case we define the nabla integral of $f$ from $a$ to $b$
($a, b \in \mathbb{T}$) by
$$
\int_a^b f(t) \nabla t := F(b) - F(a).
$$
\end{definition}

The set of all ld-continuous functions $f:\mathbb{T}\rightarrow \mathbb{R}$
is denoted by $C_{ld}=C_{ld}(\mathbb{T},\mathbb{R})$,
and the set of all nabla differentiable functions with ld-continuous derivative by
$C^{1}_{ld}=C^{1}_{ld}(\mathbb{T},\mathbb{R})$.

\begin{theorem}[\textrm{e.g.}, Theorem~8.45 of \cite{BohnerDEOTS}
or Theorem~11 of \cite{TorresDeltaNabla}]
Every ld-continuous function $f$ has a nabla antiderivative $F$.
In particular, if $a \in \mathbb{T}$, then $F$ defined by
$$
F(t) = \int_a^t f(\tau) \nabla \tau, \quad t \in \mathbb{T},
$$
is a nabla antiderivative of $f$.
\end{theorem}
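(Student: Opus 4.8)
The substantive content here is the first assertion, the \emph{existence} of a nabla antiderivative; the ``in particular'' clause is then immediate, because the nabla integral was \emph{defined} by $\int_a^b f(t)\,\nabla t = F(b)-F(a)$ for an antiderivative $F$, so normalizing by $F(a)=0$ yields exactly the displayed $F(t)=\int_a^t f(\tau)\,\nabla\tau$. Thus the plan concentrates on producing \emph{some} antiderivative of an arbitrary ld-continuous $f$.

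The plan is to establish existence by the standard two-step route of the time-scale calculus, transcribed to the nabla (backward) setting. First I would introduce the auxiliary class of \emph{regulated} functions---those whose one-sided limits exist finitely at every dense point---together with the notion of a \emph{pre-antiderivative}: a continuous $F$ that is nabla differentiable, with $F^{\nabla}=f$, off a countable exceptional set of dense points. Every ld-continuous $f$ is in particular regulated, so it suffices to show that every regulated $f$ admits a pre-antiderivative. The construction is dictated locally: across a left-scattered point $t$ the nabla-derivative formula forces $F(t)=F(\rho(t))+\nu(t)f(t)$, while on a maximal left-dense interval $f$ is continuous and $F$ may be taken as an ordinary classical antiderivative there. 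The engine that glues these local prescriptions into a single continuous $F$ on all of $\mathbb{T}$, and that certifies continuity at the junction and accumulation points, is the induction principle on time scales, where the hypothesis that one-sided limits exist is exactly what is needed to clear the dense-point step.

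Second, I would upgrade the pre-antiderivative to a genuine antiderivative under the stronger hypothesis that $f$ is ld-continuous, i.e.\ show the exceptional set is empty. At a left-scattered $t$ the identity $F(t)-F^{\rho}(t)=\nu(t)f(t)$ gives $F^{\nabla}(t)=f(t)$ directly. At a left-dense $t$, ld-continuity of $f$ supplies, for each $\epsilon>0$, a neighborhood on which $|f(s)-f(t)|$ is small; feeding this into the difference quotient and the defining inequality $|F^{\rho}(t)-F(s)-F^{\nabla}(t)(\rho(t)-s)|\le \epsilon|\rho(t)-s|$ yields $F^{\nabla}(t)=f(t)$, so $F$ is nabla differentiable on all of $\mathbb{T}_{\kappa}$ with $F^{\nabla}=f$.

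I expect the main obstacle to be the first step, the existence of pre-antiderivatives for regulated functions. Both the \emph{well-definedness} of the global $F$---reconciling the forced recursions at scattered points with the classical integrals on dense intervals---and the verification of \emph{continuity} at accumulation points of left-scattered points are delicate, and it is precisely there that the induction principle and the regulatedness hypothesis carry the argument. Once that is in place, the second step is a routine $\epsilon$-estimate and the closing normalization $F(a)=0$ is trivial.
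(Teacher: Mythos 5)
This statement is one the paper does not prove at all --- it is quoted as a preliminary result with the proof deferred to Theorem~8.45 of \cite{BohnerDEOTS} and Theorem~11 of \cite{TorresDeltaNabla} --- so the only meaningful comparison is with the cited textbook argument. Your plan reproduces essentially that argument (whose delta version is Theorem~1.74 of \cite{BohnerDEOTS}, transcribed to the nabla setting by duality): pass from ld-continuous to regulated, construct a pre-antiderivative by gluing the forced values $F(t)=F(\rho(t))+\nu(t)f(t)$ at left-scattered points to classical antiderivatives on dense intervals via the induction principle, then use ld-continuity and an $\epsilon$-estimate on the difference quotient to show the exceptional set is empty, and finish with the trivial normalization $F(a)=0$. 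Your identification of the main obstacle is also accurate: the existence of pre-antiderivatives for regulated functions is precisely the step that the cited textbook itself states without a full proof (referring to the literature on integration on measure chains), so a genuinely self-contained write-up would have to carry out that induction-principle gluing in detail, and would also need the monotonicity of the resulting integral to justify the estimate $\bigl|\int_s^t (f(\tau)-f(t))\nabla\tau\bigr|\le \epsilon|t-s|$ in your second step.
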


\begin{theorem}[\textrm{e.g.}, Theorem~8.47 of \cite{BohnerDEOTS}
or Theorem~12 of \cite{TorresDeltaNabla}]
\label{tw:intprop}
If $a,b\in\mathbb{T}$, $a\leq b$, and $f,g\in\ C_{ld}(\mathbb{T}, \mathbb{R})$, then
\begin{enumerate}

\item $\displaystyle \int\limits_{a}^{b}(f(t)+g(t))\nabla t
=\int\limits_{a}^{b} f(t)\nabla t+\int\limits_{a}^{b}g(t)\nabla t$;

\item $\displaystyle \int\limits_{a}^{a} f(t)\nabla t=0$;

\item $\displaystyle \int\limits_{a}^{b}f(t)g^{\nabla}(t)\nabla t
=\left.f(t)g(t)\right|^{t=b}_{t=a}-\int\limits_{a}^{b}f^{\nabla}(t)g^\rho(t)\nabla t$;

\item If $f(t)>0$ for all $a<t\leq b$,
then $\int\limits_{a}^{b}f(t)\nabla t >0$;

\item If $t\in\mathbb{T}_{\kappa}$,
then $\displaystyle \int_{\rho(t)}^{t}f(\tau)\nabla \tau=\nu(t)f(t)$.
\end{enumerate}
\end{theorem}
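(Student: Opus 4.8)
The plan is to derive all five statements directly from the defining identity $\int_a^b f(t)\,\nabla t = F(b)-F(a)$, where $F$ is any nabla antiderivative of $f$; such an $F$ exists because $f$ is ld-continuous, by the preceding existence theorem. Statements (1) and (2) are then essentially immediate. For (2), taking $b=a$ gives $F(a)-F(a)=0$. For (1), if $F$ and $G$ are nabla antiderivatives of $f$ and $g$, then by Theorem~\ref{tw:differprop}(1) the function $F+G$ is a nabla antiderivative of $f+g$, so $\int_a^b(f+g)\,\nabla t=(F+G)(b)-(F+G)(a)$, and regrouping the four terms yields the sum of the two integrals.

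For (3), the integration-by-parts formula, I would start from the product rule of Theorem~\ref{tw:differprop}(3) in the form $(fg)^\nabla(t)=f^\nabla(t)g^\rho(t)+f(t)g^\nabla(t)$, solve for $f(t)g^\nabla(t)$, and integrate both sides from $a$ to $b$. The term $\int_a^b(fg)^\nabla(t)\,\nabla t$ collapses to $f(t)g(t)\big|_{a}^{b}$ because $fg$ is its own antiderivative, while linearity from part (1) splits off the remaining $\int_a^b f^\nabla(t)g^\rho(t)\,\nabla t$; rearranging gives the claim. Here one tacitly uses that $f,g$ are nabla differentiable, so that $f^\nabla,g^\nabla$ are available. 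For (5), I would exploit the pointwise formula for the nabla derivative at a left-scattered point: feeding $s=t$ into the definition of $f^\nabla$ forces $F^\nabla(t)=\big(F(t)-F^\rho(t)\big)/\nu(t)$ whenever $\nu(t)>0$, so that $\int_{\rho(t)}^t f(\tau)\,\nabla\tau=F(t)-F(\rho(t))=\nu(t)F^\nabla(t)=\nu(t)f(t)$; when $t$ is left-dense both sides vanish, since $\rho(t)=t$ and $\nu(t)=0$.

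The genuinely substantive step is the positivity statement (4), and this is the part I expect to be the main obstacle, because it cannot be read off from the antiderivative identity alone: knowing only that $F^\nabla=f>0$ on $(a,b]$ does not by itself give $F(b)>F(a)$ without some form of monotonicity principle. The plan is to invoke, or first establish, the lemma that a function with everywhere-positive nabla derivative is strictly increasing, which on time scales follows from a nabla mean value theorem; applying it to $F$ yields $\int_a^b f(t)\,\nabla t=F(b)-F(a)>0$. An alternative, heavier route is to identify the nabla integral with the Lebesgue integral against the nabla measure $\mu_\nabla$ on $(a,b]$; since $\mu_\nabla\big((a,b]\big)=b-a>0$ and the integrand is strictly positive there, positivity is then immediate. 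Either way, this is exactly the place where a tool beyond the material quoted above is required, which is why it is singled out as the crux.
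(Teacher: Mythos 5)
The paper offers no proof of this theorem at all: it is a quoted preliminary, with the argument deferred to Theorem~8.47 of \cite{BohnerDEOTS} (equivalently Theorem~12 of \cite{TorresDeltaNabla}), so there is no internal proof to compare yours against; your proposal has to be judged as a reconstruction of the cited textbook proof, and as such it is essentially correct and follows the standard route. Items (1) and (2) are immediate from the antiderivative identity exactly as you say; item (3) follows from the product rule in Theorem~\ref{tw:differprop} together with the observation that $fg$ is an antiderivative of $(fg)^{\nabla}$, and you are right to flag that the statement tacitly assumes $f,g\in C^{1}_{ld}$, since otherwise $f^{\nabla}$ and $g^{\nabla}$ need not exist; item (5) is correctly reduced to the pointwise identity $F^{\nabla}(t)=\bigl(F(t)-F^{\rho}(t)\bigr)/\nu(t)$ at left-scattered $t$ (obtained by taking $s=t$ in the definition of the nabla derivative), the left-dense case being trivial because then $\rho(t)=t$ and $\nu(t)=0$.

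The one caution concerns item (4), which you rightly single out as the crux. Your phrase ``invoke, or first establish'' should read ``establish'': the monotonicity lemma (positive nabla derivative implies strictly increasing) \emph{is} the entire content of item (4), and you must make sure the version you appeal to is not itself derived --- as it sometimes is in expositions --- from positivity of the nabla integral, which would be circular. The clean non-circular route is a Fermat/Rolle-type argument at extremal points: at a left-scattered maximum $t_{0}$ of $F$ on a subinterval one has $F^{\nabla}(t_{0})=\bigl(F(t_{0})-F^{\rho}(t_{0})\bigr)/\nu(t_{0})\geq 0$, and at left-dense points a one-sided limit argument applies; this yields a nabla mean value theorem and hence strict monotonicity of $F$, giving $F(b)-F(a)>0$. (The induction principle on time scales, or your measure-theoretic alternative, also works.) Two small additional points: item (4) implicitly requires $a<b$, since for $a=b$ the integral vanishes; and the same MVT-type fact is already needed to know the nabla integral is well defined at all (two nabla antiderivatives differ by a constant), so the tool you identify as the obstacle is genuinely unavoidable --- which confirms your diagnosis of where the difficulty sits.
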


\begin{definition}
If $a\in\mathbb{T}$, $\sup\mathbb{T}=+\infty$
and $f\in C_{ld}([a,+\infty[,\mathbb{R})$,
then we define the improper nabla integral by
\begin{equation*}
\int\limits_{a}^{+\infty} f(t)\nabla t
:= \lim\limits_{b\to +\infty}\int\limits_{a}^{b} f(t)\nabla t,
\end{equation*}
provided this limit exists
(in $\overline {\mathbb{R}}:=\mathbb{R}\cup\lbrace -\infty, +\infty\rbrace$).
\end{definition}

\begin{theorem}[\textrm{e.g.}, \cite{LangUnderAnalysis}]
Let $S$ and $T$ be subsets of a normed vector space.
Let $f$ be a map defined on $T\times S$,
having values in some complete normed vector space.
Let $v$ be adherent to $S$ and $w$ adherent to $T$. Assume that
\begin{enumerate}
\item $\lim\limits_{x\to v} f(t,x)$ exists for each $t\in T$;
\item $\lim\limits_{t\to w} f(t,x)$ exists uniformly for $x\in S$.
\end{enumerate}
Then, $\lim\limits_{t\to w}\lim\limits_{x\to v} f(t,x)$,
$\lim\limits_{x\to v}\lim\limits_{t\to w} f(t,x)$ and
$\lim\limits_{(t,x)\to (w,v)} f(t,x)$ all exist and are equal.
\end{theorem}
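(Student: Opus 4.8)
The plan is to use the completeness of the target space to manufacture the common limit by a Cauchy criterion, and then to tie the three limits together with standard $\epsilon/3$-estimates. Write $\varphi(t):=\lim_{x\to v} f(t,x)$ for the inner limit guaranteed by hypothesis (1), and let $\psi(x):=\lim_{t\to w} f(t,x)$ be the limit from hypothesis (2), whose existence is \emph{uniform} in $x\in S$. The object I ultimately want to produce is a single value $L$ that serves as all three limits.

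First I would produce $L$. The uniform convergence in (2) yields a uniform Cauchy condition: given $\epsilon>0$ there is a neighborhood $N_w$ of $w$ such that $\|f(t,x)-f(t',x)\|\le\epsilon$ for \emph{all} $x\in S$ whenever $t,t'\in T\cap N_w$. Letting $x\to v$ and using (1) together with continuity of the norm gives $\|\varphi(t)-\varphi(t')\|\le\epsilon$ for $t,t'\in T\cap N_w$, so $\varphi$ satisfies the Cauchy condition as $t\to w$. Since the target space is complete and $w$ is adherent to $T$, the limit $L:=\lim_{t\to w}\varphi(t)$ exists; this is precisely the iterated limit $\lim_{t\to w}\lim_{x\to v} f(t,x)$.

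Next I would show $\lim_{x\to v}\psi(x)=L$, which is the step I expect to be the main obstacle. The difficulty is that the inner convergence $f(t,x)\to\varphi(t)$ is only \emph{pointwise} in $t$, so a naive splitting that keeps $t$ free fails; the remedy is to freeze a single good value of $t$. Fix $\epsilon>0$ and shrink $N_w$ so that simultaneously $\|f(t,x)-\psi(x)\|<\epsilon$ for all $x\in S$ (by (2)) and $\|\varphi(t)-L\|<\epsilon$ (by the previous step), for $t\in T\cap N_w$. Choose one $t_0\in T\cap N_w$; since $\varphi(t_0)=\lim_{x\to v}f(t_0,x)$, there is a neighborhood $N_v$ of $v$ with $\|f(t_0,x)-\varphi(t_0)\|<\epsilon$ for $x\in S\cap N_v$. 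Then, for such $x$, the triangle inequality
\[
\|\psi(x)-L\|\le\|\psi(x)-f(t_0,x)\|+\|f(t_0,x)-\varphi(t_0)\|+\|\varphi(t_0)-L\|<3\epsilon
\]
shows $\lim_{x\to v}\psi(x)=L$, i.e.\ the second iterated limit exists and equals $L$.

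Finally, the joint limit follows from a two-term estimate. Since $\lim_{x\to v}\psi(x)=L$ was just established, there is a neighborhood $N_v'$ of $v$ with $\|\psi(x)-L\|<\epsilon$ on $S\cap N_v'$. For $t\in T\cap N_w$ and $x\in S\cap N_v'$ one then has
\[
\|f(t,x)-L\|\le\|f(t,x)-\psi(x)\|+\|\psi(x)-L\|<2\epsilon,
\]
using the uniform bound from (2) for the first term, so $\lim_{(t,x)\to(w,v)}f(t,x)=L$. The subtlety threaded through the argument is the asymmetry of the hypotheses — uniformity is available only in the $t\to w$ direction — which is exactly why completeness is needed to generate $L$ and why in the $x\to v$ direction one must fix a single $t_0$ rather than attempt a uniform split.
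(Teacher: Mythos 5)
Your proof is correct, and there is nothing in the paper to compare it against: the paper states this theorem in its Preliminaries as a quoted background result, citing Lang's \emph{Undergraduate Analysis}, and gives no proof of its own. Your argument --- extracting a uniform Cauchy condition from hypothesis (2), letting $x\to v$ and using completeness of the target space to produce $L=\lim_{t\to w}\varphi(t)$, then freezing a single $t_0$ for the $\epsilon/3$ estimate that yields $\lim_{x\to v}\psi(x)=L$, and finally the two-term estimate for the joint limit --- is exactly the standard textbook proof (essentially the one in the cited reference), and each step, including the appeal to adherence of $w$ to $T$ and of $v$ to $S$, checks out.
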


The next result can be easily obtained from Theorem~4 of
\cite{Generalizing_the_variational_theory} by using the
delta-nabla duality theory of time scales
\cite{cc:dual,comBasia:duality,comNaty:AML}.

\begin{theorem}
\label{tw:necessary optimality condition}
Suppose that $x_{\star}$ is a local minimizer or local maximizer to problem
\begin{equation*}
\mathcal{L}(x) = \int\limits_{a}^{b}
L\left(t,x^{\rho}(t),x^{\nabla}(t),z(t)\right)\nabla t
\longrightarrow \textrm{extr},
\end{equation*}
where the variable $z$ is the integral defined by
\begin{equation*}
z(t)=\int\limits_{a}^{t}
g\left(\tau, x^{\rho}(\tau),x^{\nabla}(\tau)\right)\nabla\tau,
\end{equation*}
in the class of functions $x\in C^{1}_{ld}(\mathbb{T}, \mathbb{R}^{n})$
satisfying the boundary conditions
$x(a)=\alpha$ and $x(b)=\beta$.
Then, $x_{\star}$ satisfies the Euler--Lagrange system of equations
\begin{multline*}
g_{x}\langle x\rangle (t)
\int\limits_{\rho(t)}^{b} L_{z}[x,z](\tau)\nabla\tau
-\left(g_{v}\langle x\rangle(t)
\int\limits_{\rho(t)}^{b}L_{z}[x,z](\tau)\nabla \tau\right)^{\nabla}\\
+ L_{x}[x,z](t) -L_{v}^{\nabla}[x,z](t) =0
\end{multline*}
for all $t\in [a,b]_{\kappa}$, where
$L_{x}$, $L_{v}$ and $L_{z}$ are,
respectively, the partial derivatives of $L(\cdot, \cdot, \cdot, \cdot)$
with respect to its second, third and fourth argument,
$g_{x}$ and $g_{v}$ are, respectively,
the partial derivatives of $g(\cdot,\cdot,\cdot)$ with respect
to its second and third argument, and the operators $[\cdot,\cdot]$
and $\langle \cdot \rangle$ are defined by
$[x,z](t):= \left(t, x^{\rho}(t), x^{\nabla}(t), z(t)\right)$
and $\langle x\rangle(t):=\left(t, x^{\rho}(t), x^{\nabla}(t)\right)$.
\end{theorem}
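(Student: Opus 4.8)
The plan is to argue directly by the method of variations, using the nabla integration-by-parts formula of Theorem~\ref{tw:intprop}(3) together with a fundamental (Dubois--Reymond type) lemma for the nabla calculus. Since the delta analogue is Theorem~4 of \cite{Generalizing_the_variational_theory}, one could instead deduce the statement at once through the delta--nabla duality of \cite{cc:dual,comBasia:duality,comNaty:AML}; here I outline the self-contained variational route. First I would fix an admissible variation $h\in C^1_{ld}(\mathbb{T},\mathbb{R}^n)$ with $h(a)=h(b)=0$, set $x=x_\star+\epsilon h$, and define $\phi(\epsilon)=\mathcal{L}(x_\star+\epsilon h)$. As $x_\star$ is a local extremizer subject to the fixed boundary data, necessarily $\phi'(0)=0$. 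Differentiating under the nabla integral (legitimate by the assumed smoothness of $L$ and $g$), the first variation is
\[
\phi'(0) = \int\limits_a^b \left( L_x[x_\star,z](t)\, h^\rho(t) + L_v[x_\star,z](t)\, h^\nabla(t) + L_z[x_\star,z](t)\, \delta z(t) \right)\nabla t = 0,
\]
where the induced variation of $z$, obtained by differentiating its defining integral at $\epsilon=0$, is
\[
\delta z(t) = \int\limits_a^t \left( g_x\langle x_\star\rangle(\tau)\, h^\rho(\tau) + g_v\langle x_\star\rangle(\tau)\, h^\nabla(\tau) \right)\nabla\tau.
\]

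The main obstacle is the last term: a double nabla integral in which the variation $h$ is buried inside an inner integral and must be extracted so that $h^\rho$ and $h^\nabla$ appear linearly in a single integral. I would remove it by integration by parts in the outer variable. Setting $P(t)=-\int_t^b L_z[x_\star,z](\tau)\,\nabla\tau$, so that $P^\nabla=L_z[x_\star,z]$ and $P(b)=0$, and using $\delta z(a)=0$, Theorem~\ref{tw:intprop}(3) kills both boundary contributions and yields
\begin{multline*}
\int\limits_a^b L_z[x_\star,z](t)\, \delta z(t)\,\nabla t
= -\int\limits_a^b P^\rho(t)\,(\delta z)^\nabla(t)\,\nabla t \\
= \int\limits_a^b \left( \int\limits_{\rho(t)}^b L_z[x_\star,z](\tau)\,\nabla\tau \right)
\left( g_x\langle x_\star\rangle(t)\, h^\rho(t) + g_v\langle x_\star\rangle(t)\, h^\nabla(t) \right)\nabla t.
\end{multline*}
Here the shift of the lower limit from $t$ to $\rho(t)$ is exactly the $g^\rho$ produced by the nabla integration-by-parts rule, and it is what makes $\int_{\rho(t)}^{b}L_z\,\nabla\tau$ appear in the final answer. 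Writing $W(t):=\int_{\rho(t)}^b L_z[x_\star,z](\tau)\,\nabla\tau$, the first variation collapses to
\[
\int\limits_a^b \left[ \left( L_x[x_\star,z] + W g_x\langle x_\star\rangle \right)(t)\, h^\rho(t)
+ \left( L_v[x_\star,z] + W g_v\langle x_\star\rangle \right)(t)\, h^\nabla(t) \right]\nabla t = 0.
\]

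Finally I would integrate the $h^\nabla$ terms by parts once more via Theorem~\ref{tw:intprop}(3). Since $h(a)=h(b)=0$, the boundary term again vanishes, and $h^\nabla$ is traded for $-h^\rho$ acting on the nabla derivative of its coefficient, giving
\[
\int\limits_a^b \left[ g_x\langle x_\star\rangle\, W - \left( g_v\langle x_\star\rangle\, W \right)^\nabla
+ L_x[x_\star,z] - L_v^\nabla[x_\star,z] \right](t)\, h^\rho(t)\,\nabla t = 0
\]
for every admissible $h$. Invoking the fundamental lemma of the nabla calculus of variations then forces the bracketed expression to vanish for all $t\in[a,b]_\kappa$, which is precisely the asserted Euler--Lagrange system. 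The delicate points are the justification of differentiating the composite integrand in $\epsilon$, the vanishing of both boundary terms in the two integrations by parts, the careful bookkeeping of the $\rho$-shift, and the use of a fundamental lemma stated for $h^\rho$ rather than $h$.
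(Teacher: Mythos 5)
Your proof is correct, but it takes a genuinely different route from the paper. The paper offers no variational argument for this theorem at all: it simply remarks that the result ``can be easily obtained from Theorem 4 of \cite{Generalizing_the_variational_theory} by using the delta-nabla duality theory of time scales'', i.e., the nabla statement is imported wholesale from the already-proved delta analogue. You acknowledge that option but instead carry out the direct, self-contained derivation: first variation including the induced variation $\delta z$; integration by parts with the antiderivative $P(t)=-\int_t^b L_z[x_\star,z](\tau)\nabla\tau$, whose backward shift $P^{\rho}$ is exactly what produces the kernel $\int_{\rho(t)}^{b}L_z[x_\star,z](\tau)\nabla\tau$; a second integration by parts on the $h^{\nabla}$ terms; and a nabla fundamental (Dubois--Reymond) lemma stated for $h^{\rho}$. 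Your bookkeeping is sound: the boundary terms vanish exactly as you say, from $P(b)=0$, $\delta z(a)=0$ and $h(a)=h(b)=0$. It is worth noting that your skeleton is essentially the computation the paper \emph{does} perform later, in the proof of Theorem~\ref{maintheorem}, for the harder infinite-horizon problem --- the same double-integral manipulation and the same $\rho$-shifted kernel appear there, with the finite endpoint $b$ replaced by $T'$ and the fundamental lemma replaced by its limit-infimum version (Lemma~\ref{lem:Dubois-Reymond rho}). The trade-off is clear: the paper's duality citation buys brevity and reuse of an existing theorem; your route buys a proof readable without knowing the delta result or the duality machinery, at the cost of the technical hypotheses you rightly flag (differentiation under the integral sign in $\epsilon$, nabla differentiability of $g_v\langle x_\star\rangle(t)\int_{\rho(t)}^{b}L_z[x_\star,z](\tau)\nabla\tau$, and ld-continuity of the final bracket so that the fundamental lemma applies).
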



\section{Main Results}
\label{sec:mr}

Let $\mathbb{T}$ be a time scale such that $\sup \mathbb{T} =+\infty$.
Suppose that $a$, $T$, $T'\in\mathbb{T}$ are such that $T>a$ and $T'>a$.
The meaning of $L[x,z](t)$, $g\langle x\rangle(t)$ and that of
partial derivatives $L_{x}[x,z](t)$, $L_{v}[x,z](t)$,
$L_{z}[x,z](t)$, $g_{x}\langle x\rangle(t)$
and $g_{v}\langle x\rangle(t)$ is given
in Theorem~\ref{tw:necessary optimality condition}.
Let us consider the following variational problem on $\mathbb{T}$:
\begin{equation}
\label{eq:main result}
\mathcal{J}(x) :=
\int_a^\infty L[x,z](t) \nabla t =
\int_a^\infty L\left(t,x^{\rho}(t),x^{\nabla}(t),z(t)\right)\nabla t
\longrightarrow \max
\end{equation}
subject to $x(a)=x_{a}$. The variable $z$ is the integral defined by
\begin{displaymath}
z(t) := \int_{a}^{t}g\langle x\rangle(\tau)\nabla\tau
= \int_{a}^{t}g\left(\tau, x^{\rho}(\tau),x^{\nabla}(\tau)\right)\nabla\tau.
\end{displaymath}
We assume that $x_{a}\in\mathbb{R}^{n}$, $n\in\mathbb{N}$,
$(u,v,w)\rightarrow L(t,u,v,w)$ is a
$C_{ld}^{1}(\mathbb{R}^{2n+1},\mathbb{R})$
and $(u,v)\rightarrow g(t,u,v)$ a
$C_{ld}^{1}(\mathbb{R}^{2n},\mathbb{R})$
function for any $t\in\mathbb{T}$, and functions
$t \rightarrow L_v[x,z](t)$
and $t \rightarrow g_v\langle x\rangle(t)$
are nabla differentiable for all
$x\in C_{ld}^{1}(\mathbb{T},\mathbb{R}^{n})$.

\begin{definition}
\label{df:1}
We say that $x$ is an admissible path for problem \eqref{eq:main result}
if $x\in C^{1}_{ld}\left(\mathbb{T}, \mathbb{R}^{n}\right)$ and $x(a)=x_{a}$.
\end{definition}

\begin{definition}
\label{df:2}
We say that $x_{\star}$ is a weak maximizer to problem \eqref{eq:main result}
if $x_{\star}$ is an admissible path and, moreover,
$$
\lim\limits_{T\to +\infty}\inf\limits_{T'\geq T}
\int\limits_{a}^{T'} \left(L[x,z](t)
-L[x_{\star},z_{\star}](t)\right)\nabla t \leq 0
$$
for all admissible path $x$.
\end{definition}

\begin{lemma}
\label{lem:Dubois-Reymond rho}
Let $g\in C_{ld}(\mathbb{T}, \mathbb {R})$. Then,
$$
\lim\limits_{T\to\infty} \inf\limits_{T'\geq T}\int\limits_{a}^{T'}
g(t)\eta^{\rho}(t)\nabla t=0
$$
for all $\eta\in C_{ld}\left(\mathbb{T}, \mathbb {R}\right)$
such that $\eta (a)=0$ if, and only if, $g(t)=0$ on $[a, +\infty[$.
\end{lemma}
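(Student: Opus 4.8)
The plan is to prove both implications, treating the easy direction directly and the substantive one by contraposition, with the key idea that the nested limit--infimum in the statement collapses to an ordinary integral as soon as the test function $\eta$ has bounded support.

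The sufficiency ($\Leftarrow$) is immediate: if $g(t)=0$ for all $t\in[a,+\infty[$, then the integrand $g(t)\eta^{\rho}(t)$ vanishes identically, so $\int_{a}^{T'}g(t)\eta^{\rho}(t)\nabla t=0$ for every $T'\geq a$ and every admissible $\eta$; hence the inner infimum and the outer limit are both $0$. For the necessity ($\Rightarrow$) I would argue by contradiction: suppose $g(t_{0})\neq 0$ for some $t_{0}\in[a,+\infty[$ and, replacing $\eta$ by $-\eta$ if needed, assume $g(t_{0})>0$. The goal is to produce a \emph{single} admissible $\eta\in C_{ld}(\mathbb{T},\mathbb{R})$ with $\eta(a)=0$ and bounded support for which $c:=\int_{a}^{\infty}g(t)\eta^{\rho}(t)\nabla t>0$. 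The crucial observation is that if $\eta$ vanishes for $t>t_{1}$, then $g(t)\eta^{\rho}(t)=g(t)\eta(\rho(t))=0$ once $\rho(t)>t_{1}$, so the map $T'\mapsto\int_{a}^{T'}g(t)\eta^{\rho}(t)\nabla t$ is constant, equal to $c$, for all sufficiently large $T'$. Consequently $\inf_{T'\geq T}\int_{a}^{T'}g(t)\eta^{\rho}(t)\nabla t=c$ for all large $T$, and the iterated limit equals $c\neq 0$, contradicting the hypothesis. This reduces the infinite-horizon statement to a classical finite-horizon Dubois--Reymond-type argument.

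For the construction of $\eta$ I would split according to the nature of $t_{0}$. If $t_{0}$ is left-scattered ($\rho(t_{0})<t_{0}$), I take $\eta$ to be a nonnegative ld-continuous bump with $\eta(\rho(t_{0}))=1$ supported in a neighborhood of $\rho(t_{0})$ chosen small enough to exclude $a$ and to make $\eta^{\rho}$ vanish away from $t_{0}$; then Theorem~\ref{tw:intprop}(5) gives $\int_{\rho(t_{0})}^{t_{0}}g(t)\eta^{\rho}(t)\nabla t=\nu(t_{0})g(t_{0})\eta(\rho(t_{0}))=\nu(t_{0})g(t_{0})>0$, whence $c>0$. If $t_{0}$ is left-dense, ld-continuity of $g$ yields a subinterval on which $g>0$; I choose $\eta$ nonnegative, supported there, and strictly positive on a subset so that $g\eta^{\rho}\geq 0$ with strict positivity on a set of positive measure, and conclude $c>0$ from the positivity property Theorem~\ref{tw:intprop}(4).

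I expect the main obstacle to be not the bump construction itself, which is routine, but the interplay between the shifted factor $\eta^{\rho}$ and the time-scale structure: one must verify that the localized $\eta$ really isolates the contribution of $t_{0}$ (so that no spurious atoms at other left-scattered points or non-atomic contributions on left-dense sets survive), and that the requirement $\eta(a)=0$ can be respected, which forces care when $\rho(t_{0})$ lies close to the left endpoint $a$. Once these bookkeeping points are settled, the bounded-support observation collapses the $\lim_{T\to\infty}\inf_{T'\geq T}$ to the single value $c\neq 0$ and the contradiction follows, completing the contrapositive.
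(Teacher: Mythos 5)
You follow the same overall route as the paper: prove necessity by contradiction, localize at a point $t_0$ with $g(t_0)>0$, build an ld-continuous bump $\eta$ with $\eta(a)=0$, and use the fact that for compactly supported $\eta$ the map $T'\mapsto\int_a^{T'}g(t)\eta^{\rho}(t)\nabla t$ is eventually constant, so the iterated limit--infimum collapses to a single number $c$ that the hypothesis forces to vanish. Your left-dense case is the paper's Case~I, and your left-scattered spike is sound precisely when $\rho(t_0)$ is itself left-scattered (then $\rho(t_0)$ is isolated, so the spike is ld-continuous); this is the paper's first subcase of Case~II. The gap is the remaining configuration: $t_0$ left-scattered but $\rho(t_0)$ left-dense. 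There your two requirements on $\eta$ are mutually incompatible: ld-continuity forces $\eta$ to be continuous at the left-dense point $\rho(t_0)$, so $\eta(\rho(t_0))=1$ implies $\eta$ stays close to $1$ on a whole left-neighborhood $[t_2,\rho(t_0)]$, and hence $\eta^{\rho}$ cannot ``vanish away from $t_0$''. The integral then acquires the extra term $\int_{t_2}^{\rho(t_0)}g(t)\eta^{\rho}(t)\nabla t$, over which you have no sign control ($g$ may be negative arbitrarily close to $\rho(t_0)$ on the left, since only $g(t_0)>0$ is known). You flag this as a bookkeeping issue, but it cannot be ``settled'' for the $\eta$ you describe: it is the crux of the lemma, and it is exactly where the paper spends most of its proof, splitting into the subcases $g(\rho(t_0))\neq 0$, $g\equiv 0$ on some $[t_3,\rho(t_0)]$, and an accumulation argument with a sequence $s_k\uparrow\rho(t_0)$ of points where $g\neq 0$.

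The hole is repairable inside your framework --- and arguably more cleanly than in the paper --- by an estimate rather than by making $\eta^{\rho}$ vanish: since $\rho(t_0)$ is left-dense, ld-continuity makes $g$ continuous at $\rho(t_0)$, hence $|g|\leq M$ on $[t_2,\rho(t_0)]$ for all $t_2$ close enough to $\rho(t_0)$; take $0\leq\eta\leq 1$ ramping continuously from $0$ at $t_2$ to $1$ at $\rho(t_0)$, and set $\eta(t)=0$ for $t\geq t_0$ (admissible, because $t_0$ is left-scattered, so ld-continuity imposes nothing at $t_0$). Then, by point~5 of Theorem~\ref{tw:intprop}, $c=\int_{t_2}^{\rho(t_0)}g(t)\eta^{\rho}(t)\nabla t+\nu(t_0)g(t_0)\geq \nu(t_0)g(t_0)-M\left(\rho(t_0)-t_2\right)>0$ as soon as $\rho(t_0)-t_2<\nu(t_0)g(t_0)/M$, and the contradiction follows. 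With this estimate (or with the paper's case analysis) inserted, your proof is complete; as written, however, the left-scattered case rests on a construction that is impossible in the subcase $\rho(t_0)$ left-dense, so there is a genuine gap.
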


\begin{proof}
The implication $\Leftarrow$ is obvious.
Let us prove the implication $\Rightarrow$
by contradiction. Suppose that $g(t)\not\equiv 0$.
Let $t_{0}$ be a point on $[a,+\infty[$ such that $g(t_{0})\neq 0$.
Suppose, without loss of generality, that $g(t_{0})> 0$.
Two situations may occur: $t_{0}$ is left-dense (case I)
or $t_{0}$ is left-scattered (case II).
Case I: if $t_{0}$ is left-dense, then function $g$
is positive on $[t_{1}, t_{0}]$ for $t_{1}<t_{0}$. Define:
\begin{equation*}
\eta(t)=\left\{
\begin{array}{l}
(t_{0}-t)(t-t_{1})\ \mbox{for } t\in [t_{1},t_{0}],
\\0\ \mbox{ otherwise.}
\end{array}\right.
\end{equation*}
Then,
\begin{equation*}
\eta(\rho(t))=\left\{
\begin{array}{l}
\left(t_{0}-\rho(t)\right)\left(\rho(t)-t_{1}\right)
\mbox{ for } \rho(t)\in [t_{1},t_{0}],\\
0\ \mbox{ otherwise}.
\end{array}\right.
\end{equation*}
If $\rho(t) \in [t_{1},t_{0}]$,
then $\eta(\rho(t))=(t_{0}-\rho(t))(\rho(t)-t_{1})>0$. Thus,
$$
\lim\limits_{T\to +\infty} \inf\limits_{T'\geq T}
\int\limits_{a}^{T'}g(t)\eta^{\rho}(t)\nabla t
=\int\limits_{t_{1}}^{t_{0}}g(t)\eta(\rho(t))\nabla t >0
$$
and we obtain a contradiction. Case II: $t_{0}$ is left-scattered.
Two situations are then possible: $\rho(t_{0})$ is left-scattered
or $\rho(t_{0})$ is left-dense.
If $\rho(t_{0})$ is left-scattered, then
$\rho(\rho(t_{0}))<\rho(t_{0})<t_{0}$.
Let $t\in [\rho(t_{0}), t_{0}]$. Define
$$
\eta(t)=\left\{
\begin{array}{l}
g(t_{0})\ \mbox{ for } t=\rho(t_{0}),\\
0\ \mbox{otherwise.}
\end{array}\right.
$$
Then,
$$
\eta(\rho(t_{0}))
=\left\{
\begin{array}{l}
g(t_{0})\ \mbox{for } \rho(t_{0})=\rho(\rho(t_{0})),\\
0\ \mbox{otherwise.}
\end{array}\right.
$$
It means that $\eta(\rho(t_{0}))=g(t_{0})>0$.
From point 5 of Theorem~\ref{tw:intprop}, we obtain
\begin{equation*}
\begin{split}
\lim\limits_{T\to +\infty} \inf\limits_{T'\geq T}\int\limits_{a}^{T'}g(t)\eta^{\rho}(t)\nabla t
&= \int\limits_{\rho(t_{0})}^{t_{0}} g(t)\eta^{\rho}(t)\nabla t\\
&= g(t_{0})\eta(\rho(t_{0}))\nu(t_{0})=g(t_{0})g(t_{0})(t_{0}-\rho(t_{0}))>0,
\end{split}
\end{equation*}
which is a contradiction. It remains to consider
the situation when $\rho(t_{0})$ is left-dense.
Two cases are then possible: $g(\rho(t_{0}))\neq 0$ or $g(\rho(t_{0})) =0$.
If $g(\rho(t_{0}))\neq 0$, then we can assume that $g(\rho(t_{0}))>0$
and $g$ is also positive in $[t_{2},\rho(t_{0})]$ for $t_{2}<\rho(t_{0})$.
Define
$$
\eta(t)=\left\{
\begin{array}{l}
\left(\rho(t_{0})-t\right)(t-t_{2})\ \mbox{for } t\in [t_{2},\rho(t_{0})],\\
0\ \mbox{otherwise}.
\end{array}\right.
$$
Then,
$$
\eta(\rho(t))=\left\{
\begin{array}{l}
\left(\rho(t_{0})-\rho(t)\right)(\rho(t)-t_{2})\ \mbox{for } \rho(t)\in [t_{2},\rho(t_{0})],\\
0\ \mbox{otherwise}.
\end{array}\right.
$$
On the interval $[t_{2},\rho(t_{0})]$ the function $\eta(\rho(t))$ is greater than $0$. Then,
$$
\lim\limits_{T\to +\infty} \inf\limits_{T'\geq T}\int\limits_{a}^{T'}g(t)\eta^{\rho}(t)\nabla t
= \int\limits_{t_{2}}^{\rho(t_{0})} g(t)\eta^{\rho}(t)\nabla t>0,
$$
which is a contradiction. Suppose that $g(\rho(t_{0})) =0$.
Here two situations may occur:
(i) $g(t)=0$ on $[t_{3},\rho(t_{0})]$ for some $t_{3}<\rho(t_{0})$ or
(ii) for all $t_{3}<\rho(t_{0})$ there exists $t \in [t_{3},\rho(t_{0})]$
such that $g(t)\neq 0$. In case (i) $t_{3}<\rho(t_{0})<t_{0}$. Let us define
$$
\eta(t)=\left\{
\begin{array}{l}
g(t_{0}) \ \mbox{ for } t=\rho(t_{0}),\\
\varphi(t) \ \mbox{ for } t\in [t_{3},\rho(t_{0})[,\\
0\ \mbox{ otherwise},
\end{array}\right.
$$
for function $\varphi$ such that $\varphi \in C_{ld}$,
$\varphi(t_{3})=0$ and $\varphi(\rho(t_{0}))=g(t_{0})$. Then,
\begin{equation*}
\eta(\rho(t))=\left\{
\begin{array}{l}
g(t_{0})\ \mbox{ for } \rho(t)=\rho(t_{0}),\\
\varphi(\rho(t)) \mbox{ for } \rho(t)\in [t_{3},\rho(t_{0})[,\\
0\ \mbox{otherwise}.
\end{array}\right.
\end{equation*}
It follows from point 5 of Theorem~\ref{tw:intprop} that
\begin{equation*}
\begin{split}
\lim\limits_{T\to +\infty} \inf\limits_{T'\geq T}\int\limits_{a}^{T'}g(t)\eta^{\rho}(t)\nabla t
&=\int\limits_{t_{3}}^{t_{0}} g(t)\eta^{\rho}(t)\nabla t
=\int\limits_{\rho(t_{0})}^{t_{0}} g(t)\eta^{\rho}(t)\nabla t\\
&=\nu(t_{0})g(t_{0})\eta(\rho(t_{0}))=(t_{0}-\rho(t_{0}))g(t_{0})\eta(\rho(t_{0}))>0,
\end{split}
\end{equation*}
which is a contradiction.
In case (ii), $t_{3}<\rho (t_{0})<t_{0}$.
When $\rho(t_{0})$ is left-dense, then there exists a strictly increasing sequence
$S=\lbrace s_{k}: k\in\mathbb{N}\rbrace\subseteq\mathbb{T}$ such that
$\lim\limits_{k\to\infty} s_{k}=\rho(t_{0})$ and $g(s_{k})\neq 0$ for all $k\in\mathbb{N}$.
If there exists a left-dense $s_{k}$, then we have Case I with $t_{0}:=s_{k}$.
If all points of the sequence $S$  are left-scattered, then we have Case II
with $t_{0}:=s_{i}$, $i\in\mathbb{N}$. Since $\rho(t_{0})$ is a left-scattered point,
we are in the first situation of case II and we obtain a contradiction.
Therefore, we conclude that $g\equiv 0$ on $[a,+\infty[$.
\end{proof}

\begin{corollary}
\label{cor:Dubois-Reymond nabla}
Let $h\in C_{ld}(\mathbb{T}, \mathbb {R})$. Then,
\begin{equation}
\label{eq:cor}
\lim\limits_{T\to\infty} \inf\limits_{T'\geq T}
\int\limits_{a}^{T'} h(t)\eta^{\nabla}(t)\nabla t =0
\end{equation}
for all $\eta\in C_{ld}\left(\mathbb{T},\mathbb {R}\right)$
such that $\eta (a)=0$ if, and only if, $h(t)=c$,
$c\in\mathbb{R}$, on $[a, +\infty[$.
\end{corollary}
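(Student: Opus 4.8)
The plan is to treat the two implications separately, with all the content in the forward direction $(\Rightarrow)$ and the reverse $(\Leftarrow)$ being a short computation. For $(\Leftarrow)$ I would substitute $h\equiv c$ into \eqref{eq:cor}; since the nabla integral is defined through an antiderivative and $\eta(a)=0$, the inner integral collapses to $\int_a^{T'}c\,\eta^\nabla(t)\nabla t=c\,(\eta(T')-\eta(a))=c\,\eta(T')$, so that \eqref{eq:cor} reduces to a statement about the boundary contribution $c\,\eta(T')$ as $T'\to+\infty$. The forward direction I would obtain by the classical du Bois--Reymond device of subtracting a constant, localized to finite windows so that the positivity machinery of Lemma~\ref{lem:Dubois-Reymond rho} applies.

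Concretely, for $(\Rightarrow)$ I would fix an arbitrary $b\in\mathbb{T}$ with $b>a$ and test \eqref{eq:cor} only against variations $\eta\in C^1_{ld}$ that satisfy $\eta(a)=0$ and vanish on $[b,+\infty[$. For such $\eta$ the integrand $h\,\eta^\nabla$ is zero beyond $b$, hence $\int_a^{T'}h\,\eta^\nabla\nabla t=\int_a^{b}h\,\eta^\nabla\nabla t$ for all $T'\geq b$; the iterated limit in \eqref{eq:cor} therefore collapses to this constant value, and the hypothesis forces $\int_a^{b}h\,\eta^\nabla\nabla t=0$. I would then set $c:=\frac{1}{b-a}\int_a^{b}h(t)\nabla t$ and $\eta_0(t):=\int_a^{t}(h(\tau)-c)\,\nabla\tau$ on $[a,b]$, extended by zero on $[b,+\infty[$. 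By construction $\eta_0\in C^1_{ld}$, $\eta_0(a)=0$, $\eta_0(b)=\int_a^{b}(h-c)\nabla t=0$, and $\eta_0^\nabla=h-c$, so $\eta_0$ is an admissible window variation; substituting it and using $\int_a^{b}(h-c)\nabla t=0$ gives $0=\int_a^{b}h\,\eta_0^\nabla\nabla t=\int_a^{b}(h-c)^2\,\nabla t$.

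It remains to pass from $\int_a^{b}(h-c)^2\nabla t=0$ to $h\equiv c$ on $[a,b]$, and here I would reuse the case analysis of Lemma~\ref{lem:Dubois-Reymond rho} applied to the nonnegative ld-continuous function $(h-c)^2$: a left-dense point where $h\neq c$ makes the integrand positive on a whole subinterval, while a left-scattered such point contributes $\nu(t_0)(h-c)^2(t_0)>0$ by point~5 of Theorem~\ref{tw:intprop}, either way contradicting the vanishing of the integral. Since $b$ is arbitrary and the constants produced on nested windows $[a,b]\subset[a,b']$ must agree, they define a single $c\in\mathbb{R}$ with $h\equiv c$ on $[a,+\infty[$. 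The hard part will be the infinite horizon: one must select test functions so that the outer iterated limit genuinely reduces to a finite-interval equality, verify that the truncated $\eta_0$ stays ld-continuously nabla-differentiable at a possibly left-scattered endpoint $b$, and --- for $(\Leftarrow)$ --- control the boundary term $c\,\eta(T')$, which is exactly the point at which the behaviour of the admissible variations at infinity enters.
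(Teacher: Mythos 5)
Your forward-direction argument is correct in substance but follows a genuinely different route from the paper. The paper's proof is a two-step reduction: integrate by parts (point~3 of Theorem~\ref{tw:intprop}) to write $\int_a^{T'}h(t)\eta^{\nabla}(t)\nabla t=h(T')\eta(T')-\int_a^{T'}h^{\nabla}(t)\eta^{\rho}(t)\nabla t$, restrict to variations with $\eta(T')=0$ so that \eqref{eq:cor} becomes $\lim_{T\to\infty}\inf_{T'\geq T}\int_a^{T'}h^{\nabla}(t)\eta^{\rho}(t)\nabla t=0$, and then apply Lemma~\ref{lem:Dubois-Reymond rho} to conclude $h^{\nabla}\equiv 0$. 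Note what that costs: the paper must write $h^{\nabla}$, i.e.\ it silently assumes $h$ is nabla differentiable, which is stronger than the stated hypothesis $h\in C_{ld}$. Your window-localized du Bois--Reymond device (compactly supported variations collapse the iterated limit to a finite-interval identity; then subtract the mean value $c$ and test with $\eta_0(t)=\int_a^t(h(\tau)-c)\nabla\tau$ to force $\int_a^b(h-c)^2\nabla t=0$) never differentiates $h$, so it proves the corollary for exactly the class of $h$ in the statement. That is a real gain; the price is redoing the positivity analysis, which you correctly recycle from the proof of Lemma~\ref{lem:Dubois-Reymond rho} --- though you should note that localizing a positive contribution inside $[a,b]$ also uses that the nabla integral of a nonnegative ld-continuous function is nonnegative, so the rest of the integral cannot cancel it.

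Two corrections. First, you mislocate the junction difficulty: if $b$ is right-scattered, the zero extension of $\eta_0$ is automatically nabla differentiable at $b$ (a small neighborhood of $b$ in $\mathbb{T}$ contains no points above $b$), and for every $t>b$ one has $\rho(t)\geq b$, hence $\eta_0^{\nabla}(t)=0$; the troublesome case is a \emph{right-dense} $b$, where differentiability from the right forces $\eta_0^{\nabla}(b)=0$ while the left-hand value is $h(b)-c$. This is repaired by a continuous cutoff, e.g.\ $\eta_0(t)=\int_a^t(h(\tau)-c)(b-\tau)\nabla\tau$ with $c:=\int_a^b h(\tau)(b-\tau)\nabla\tau\big/\int_a^b(b-\tau)\nabla\tau$, so that $\eta_0(b)=0$ and $\eta_0^{\nabla}(b)=0$; one then gets $\int_a^b(h-c)^2(b-t)\nabla t=0$, which still yields $h\equiv c$ on $[a,b[$, and nested windows finish as you say. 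Second, your honest treatment of $(\Leftarrow)$ exposes a defect of the statement itself: the reduction to the boundary term $c\,\eta(T')$ is correct, and that term cannot be controlled --- for $h\equiv c\neq 0$ and $\eta(t)=t-a$ one gets $\inf_{T'\geq T}c\,(T'-a)=c\,(T-a)$, which diverges, so \eqref{eq:cor} fails and the claimed equivalence holds only for $c=0$ or under further restrictions on the test class. The paper never confronts this: its proof treats the passage to the subclass $\eta(T')=0$ as an equivalence when it is only a one-way restriction, so $(\Leftarrow)$ is not actually established there either. Since Theorem~\ref{secondmaintheorem} uses only the forward implication, your proof covers everything that is genuinely needed.
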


\begin{proof}
Using integration by parts (third item of Theorem~\ref{tw:intprop}),
\begin{equation*}
\int\limits_{a}^{T'}h(t) \eta^{\nabla}(t)\nabla t
=\left.h(t)\eta(t)\right|_{t=a}^{t=T'}
- \int\limits_{a}^{T'}h^{\nabla}(t)\eta^{\rho}(t)\nabla t
= h(T')\eta(T') - \int\limits_{a}^{T'}h^{\nabla}(t)\eta^{\rho}(t)\nabla t
\end{equation*}
holds for all $\eta\in C_{ld}(\mathbb{T}, \mathbb {R})$.
In particular, it holds for the subclass of $\eta$ with $\eta(T')=0$
and \eqref{eq:cor} is equivalent to
\begin{equation*}
\lim\limits_{T\to\infty} \inf\limits_{T'\geq T}
\int\limits_{a}^{T'}h^{\nabla}(t)\eta^{\rho}(t)\nabla t=0.
\end{equation*}
Using Lemma~\ref{lem:Dubois-Reymond rho}, we obtain $h^{\nabla}(t)=0$,
\textrm{i.e.}, $h(t)=c$, $c\in\mathbb{R}$, on $[a, +\infty[$.
\end{proof}

\begin{theorem}
\label{maintheorem}
Suppose that a weak maximizer to problem \eqref{eq:main result}
exists and is given by $x_{\star}$.
Let $p \in C_{ld}^{1}\left(\mathbb{T},\mathbb {R}^{n}\right)$
be such that $p(a)=0$. Define
$$
A(\varepsilon, T'):= \int\limits_{a}^{T'}\frac{L\left(t, x_{\star}^{\rho}(t)
+\varepsilon p^{\rho}(t),x_{\star}^{\nabla}(t)
+\varepsilon p^{\nabla}(t),z_{\star}(t,p)\right)
-L\left[x_{\star},z_{\star}\right](t)}{\varepsilon}\nabla t,
$$
where
\begin{equation*}
\begin{split}
z_{\star}(t,p)&=\int\limits_{a}^{t}
g\langle x_{\star} + \varepsilon p \rangle(\tau) \nabla \tau,\\
z_{\star}(t)&=\int\limits_{a}^{t} g \langle x_{\star}\rangle(\tau)\nabla \tau,
\end{split}
\end{equation*}
and
\begin{equation*}
\begin{split}
V(\varepsilon, T) &:= \inf\limits_{T'\geq T} \varepsilon A(\varepsilon,T'),\\
V(\varepsilon) &:=\lim\limits_{T\to\infty} V(\varepsilon, T).
\end{split}
\end{equation*}
Suppose that
\begin{enumerate}
\item $\lim\limits_{\varepsilon\to 0}\frac{V(\varepsilon, T)}{\varepsilon}$ exists for all $T$;
\item $\lim\limits_{T\to\infty}\frac{V(\varepsilon, T)}{\varepsilon}$ exists uniformly for $\varepsilon$;
\item for every $T'>a$, $T>a$, $\varepsilon\in\mathbb{R} \setminus \lbrace 0\rbrace$,
there exists a sequence $\left(A(\varepsilon, T'_{n})\right)_{n\in\mathbb{N}}$ such that
$\lim\limits_{n\to\infty} A(\varepsilon, T'_{n})
=\inf\limits_{T'\geq T} A(\varepsilon, T')$ uniformly for $\varepsilon$.
\end{enumerate}
Then, $x_{\star}$ satisfies the Euler--Lagrange system of $n$ equations
\begin{multline}
\label{eq:ELs:neq}
\lim\limits_{T\to\infty}\inf\limits_{T'\geq T}
\left\{ g_{x}\langle x\rangle(t)
\int\limits_{\rho(t)}^{T'}L_{z}[x,z](\tau)\nabla \tau
- \left(g_{v}\langle x\rangle(t)\int\limits_{\rho(t)}^{T'}
L_{z}[x,z](\tau)\nabla\tau\right)^{\nabla}\right\}\\
+ L_{x}[x,z](t)-L_{v}^{\nabla}[x,z](t)=0
\end{multline}
for all $t\in [a,+\infty[$ and the transversality condition
\begin{equation}
\label{eq:transCond}
\lim\limits_{T\to\infty}\inf\limits_{T'\geq T}
\left\{x(T') \cdot \left[L_{v}[x,z](T')
+ g_{v}\langle x\rangle(T') \nu(T') L_{z}[x,z](T')\right]\right\}=0.
\end{equation}
\end{theorem}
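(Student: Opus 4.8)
\emph{Proof proposal.}
The plan is to run the classical first-variation argument, but with every limiting operation routed through the three standing hypotheses and the interchange-of-limits theorem of \cite{LangUnderAnalysis}. First I would take an arbitrary admissible variation $x = x_{\star} + \varepsilon p$ with $p(a) = 0$, so that $\varepsilon A(\varepsilon, T')$ is exactly $\int_a^{T'}\big(L[x,z](t) - L[x_{\star},z_{\star}](t)\big)\nabla t$ with $z = z_{\star}(\cdot,p)$, and hence $V(\varepsilon)$ is the quantity appearing in Definition~\ref{df:2}. Weak maximality gives $V(\varepsilon) \leq 0$ for every $\varepsilon$, while $V(0) = 0$; thus $\varepsilon \mapsto V(\varepsilon)$ attains a maximum at $\varepsilon = 0$ and its derivative there must vanish. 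Writing this derivative as $\lim_{\varepsilon\to 0}\lim_{T\to\infty} V(\varepsilon,T)/\varepsilon$, I would invoke hypotheses (1)--(2) and the interchange theorem to swap the two limits, and then hypothesis (3) to pull the $\varepsilon\to 0$ limit through the infimum $\inf_{T'\geq T}$. The upshot is $\lim_{T\to\infty}\inf_{T'\geq T} A(0,T') = 0$, where $A(0,T') := \lim_{\varepsilon\to 0}A(\varepsilon,T')$ is the first variation truncated at $T'$.

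Next I would evaluate $A(0,T')$ by differentiating under the nabla integral (legitimate by the assumed $C^{1}_{ld}$ regularity of $L$ and $g$), which produces
\[
A(0,T') = \int_a^{T'}\Big[L_x[x_\star,z_\star]\,p^{\rho} + L_v[x_\star,z_\star]\,p^{\nabla} + L_z[x_\star,z_\star]\int_a^t\big(g_x\langle x_\star\rangle\,p^{\rho} + g_v\langle x_\star\rangle\,p^{\nabla}\big)\nabla\tau\Big]\nabla t.
\]
The double integral is then collapsed by the nabla change-of-order identity, obtained from integration by parts (item~3 of Theorem~\ref{tw:intprop}) together with $\int_a^{\rho(t)} = \int_a^{T'} - \int_{\rho(t)}^{T'}$, which turns $\int_a^{T'} L_z\big(\int_a^t(\cdots)\big)\nabla t$ into $\int_a^{T'}\big(g_x p^\rho + g_v p^\nabla\big)\big(\int_{\rho(t)}^{T'}L_z\,\nabla\tau\big)\nabla t$ with the boundary contributions cancelling. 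Grouping the coefficients of $p^\rho$ and $p^\nabla$ as $\Lambda(t,T')$ and $\Gamma(t,T')$, a final integration by parts on the $\Gamma\,p^\nabla$ term (using $p(a)=0$) yields
\[
A(0,T') = \Gamma(T',T')\,p(T') + \int_a^{T'}\big(\Lambda(t,T') - \Gamma^{\nabla}(t,T')\big)\,p^{\rho}(t)\,\nabla t,
\]
where, by item~5 of Theorem~\ref{tw:intprop}, $\Gamma(T',T') = L_v[x_\star,z_\star](T') + g_v\langle x_\star\rangle(T')\,\nu(T')\,L_z[x_\star,z_\star](T')$ is precisely the bracket in \eqref{eq:transCond}.

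To extract the Euler--Lagrange system I would first restrict to variations $p$ vanishing at the endpoints under consideration, so the boundary term drops and $\lim_{T\to\infty}\inf_{T'\geq T}\int_a^{T'}(\Lambda - \Gamma^\nabla)p^\rho\,\nabla t = 0$ for all such $p$; applying the fundamental lemma (Lemma~\ref{lem:Dubois-Reymond rho}) through the operation $\inf_{T'\geq T}$ forces $\lim_{T\to\infty}\inf_{T'\geq T}\big(\Lambda(t,T')-\Gamma^\nabla(t,T')\big) = 0$ for each $t$, which is exactly \eqref{eq:ELs:neq} after pulling the $T'$-independent part $L_x - L_v^\nabla$ outside the infimum. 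Once the Euler--Lagrange equation holds the integral term vanishes and only $\lim_{T\to\infty}\inf_{T'\geq T}\Gamma(T',T')\,p(T') = 0$ remains; choosing $p$ to coincide with $x_\star$ for large $t$ (admissible, since $p(a)=0$ constrains only the behaviour near $a$, invisible to the $\liminf$ at infinity) gives the transversality condition \eqref{eq:transCond}.

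The hard part will be the bookkeeping of the three nested limiting operations --- the two-sided $\varepsilon\to 0$ limit, the $T\to\infty$ limit, and the infimum over $T'\geq T$ --- and in particular the sign reversal in $V(\varepsilon,T)/\varepsilon = \inf_{T'\geq T}A(\varepsilon,T')$ for $\varepsilon>0$ but $=\sup_{T'\geq T}A(\varepsilon,T')$ for $\varepsilon<0$; reconciling the two one-sided limits is where the linearity of $A(0,T')$ in $p$ (so that both $p$ and $-p$ may be used) and hypothesis (3) are essential. A secondary delicate point is that the integrand $\Lambda - \Gamma^\nabla$ depends on the moving endpoint $T'$, so the fundamental lemma must be applied to the $\liminf$ expression rather than to a fixed function, and the interchange theorem is needed once more to guarantee that this $\liminf$ is the object to which Lemma~\ref{lem:Dubois-Reymond rho} legitimately applies.
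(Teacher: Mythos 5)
Your proposal follows essentially the same route as the paper's own proof: the first-variation argument through $V(\varepsilon)\leq 0$, $V(0)=0$ with the limit interchanges justified by hypotheses (1)--(3) and the Lang theorem, the change-of-order identity collapsing the double integral onto $\int_{\rho(t)}^{T'}L_z\,\nabla\tau$, integration by parts to isolate the coefficient of $p^{\rho}$ plus a boundary term, Lemma~\ref{lem:Dubois-Reymond rho} for the Euler--Lagrange system, and the special curve $p=\alpha x_{\star}$ (constant $\alpha=\beta$ for large $t$) together with item~5 of Theorem~\ref{tw:intprop} for the transversality condition. The only differences are cosmetic (your $\Lambda,\Gamma$ bookkeeping, and deriving the Fubini-type identity via integration by parts rather than the paper's product-rule computation), and you in fact flag explicitly two delicate points --- the $\inf$/$\sup$ sign issue for $\varepsilon<0$ and the $T'$-dependence of the integrand when invoking the fundamental lemma --- which the paper passes over silently.
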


\begin{proof}
If $x_{\star}$ is optimal, in the sense of Definition~\ref{df:2},
then $V(\varepsilon)\leq 0$ for any $\varepsilon\in\mathbb{R}$.
Because $V(0)=0$, then $0$ is a maximizer of $V$. We prove that
$V$ is differentiable at $0$, thus $V'(0)=0$. Note that
\begin{equation*}
\begin{split}
0&=V'(0)=\lim\limits_{\varepsilon\to 0}\frac{V(\varepsilon)}{\varepsilon}
=\lim\limits_{\varepsilon\to 0}\lim\limits_{T\to\infty}\frac{V(\varepsilon, T)}{\varepsilon}
=\lim\limits_{T\to\infty}\lim\limits_{\varepsilon\to 0}\frac{V(\varepsilon,T)}{\varepsilon}\\
&=\lim\limits_{T\to\infty}\lim\limits_{\varepsilon\to 0}\inf\limits_{T' \geq T} A(\varepsilon, T')
=\lim\limits_{T\to\infty}\lim\limits_{\varepsilon\to 0}\lim\limits_{n\to\infty} A(\varepsilon, T^{'}_{n})\\
&=\lim\limits_{T\to\infty}\lim\limits_{n\to\infty}\lim\limits_{\varepsilon\to 0} A(\varepsilon, T^{'}_{n})
=\lim\limits_{T\to\infty}\inf\limits_{T'\geq T}\lim\limits_{\varepsilon\to 0} A(\varepsilon, T')\\
&=\lim\limits_{T\to\infty}\inf\limits_{T'\geq T}\lim\limits_{\varepsilon\to 0}
\int\limits_{a}^{T'}\frac{L\left(t, x_{\star}^{\rho}(t)+\varepsilon p^{\rho}(t),x_{\star}^{\nabla}(t)
+\varepsilon p^{\nabla}(t),z_{\star}(t,p)\right)
-L\left[x_{\star},z_{\star}\right](t)}{\varepsilon}\nabla t\\
&=\lim\limits_{T\to\infty}\inf\limits_{T'\geq T} \int\limits_{a}^{T'}\lim\limits_{\varepsilon\to 0}
\frac{L\left(t, x_{\star}^{\rho}(t)+\varepsilon p^{\rho}(t),x_{\star}^{\nabla}(t)
+\varepsilon p^{\nabla}(t),z_{\star}(t,p)\right)
-L\left[x_{\star},z_{\star}\right](t)}{\varepsilon}\nabla t,
\end{split}
\end{equation*}
that is,
\begin{multline}
\label{eq:rownanie do zmieniania}
\lim\limits_{T\to\infty}\inf\limits_{T'\geq T}\int\limits_{a}^{T'} \Biggl[
L_{x}[x_{\star},z_{\star}](t) \cdot p^{\rho}(t)
+ L_{v}[x_{\star},z_{\star}](t) \cdot p^{\nabla}(t)\\
+ L_{z}[x_{\star},z_{\star}](t) \int\limits_{a}^{t}
\left(g_{x}\langle x_{\star}\rangle(\tau)
\cdot p^{\rho}(\tau)+g_{v}\langle x_{\star}\rangle(\tau)
\cdot p^{\nabla}(\tau)\right)\nabla \tau \Biggr]\nabla t = 0.
\end{multline}
Using the integration by parts formula given
by point 3 of Theorem~\ref{tw:intprop}, we obtain:
\begin{equation*}
\begin{split}
\int\limits_{a}^{T'}L_{v}[x_{\star},z_{\star}](t) \cdot p^{\nabla}(t)\nabla t
&= \left.L_{v}[x_{\star},z_{\star}](t) \cdot p(t)\right|_{t=a}^{t=T'}-\int\limits_{a}^{T'}
L_{v}^{\nabla}[x_{\star},z_{\star}](t) \cdot p^{\rho}(t)\nabla t\\
&= L_{v}[x_{\star},z_{\star}](T') \cdot p(T')
-\int\limits_{a}^{T'}L_{v}^{\nabla}[x_{\star},z_{\star}](t) \cdot p^{\rho}(t)\nabla t.
\end{split}
\end{equation*}
Next, we consider the last part of equation \eqref{eq:rownanie do zmieniania}.
First we use the third nabla differentiation formula of Theorem~\ref{tw:differprop}:
\begin{equation*}
\begin{split}
&\left[\int\limits_{t}^{T'}L_{z}[x_{\star},z_{\star}](\tau)\nabla \tau
\int\limits_{a}^{t}\left(g_{x}\langle x_{\star}\rangle(\tau) \cdot p^{\rho}(\tau)
+g_{v}\langle x_{\star}\rangle(\tau) \cdot p^{\nabla}(\tau)\right)\nabla \tau\right]^{\nabla}\\
&=\left(\int\limits_{t}^{T'} L_{z}[x_{\star},z_{\star}](\tau)\nabla \tau\right)^{\nabla}
\int\limits_{a}^{t}\left(g_{x}\langle x_{\star}\rangle(\tau)
\cdot p^{\rho}(\tau)+g_{v}\langle x_{\star}\rangle(\tau)
\cdot p^{\nabla}(\tau)\right)\nabla\tau\\
&\qquad +\left(\int\limits_{\rho(t)}^{T'}L_{z}[x_{\star},z_{\star}](\tau)\nabla \tau\right)
\left(\int\limits_{a}^{t}\left(g_{x}\langle x_{\star}\rangle(\tau)
\cdot p^{\rho}(\tau)+g_{v}\langle x_{\star}\rangle(\tau)
\cdot p^{\nabla}(\tau)\right)\nabla \tau\right)^\nabla\\
&=-L_{z}[x_{\star},z_{\star}](t)\int\limits_{a}^{t}\left(g_{x}\langle x_{\star}\rangle(\tau)
\cdot p^{\rho}(\tau)+g_{v}\langle x_{\star}\rangle(\tau)
\cdot p^{\nabla}(\tau)\right)\nabla\tau\\
&\qquad +\left(\int\limits_{\rho(t)}^{T'}L_{z}[x_{\star},z_{\star}](\tau)\nabla \tau\right)
\left(g_{x}\langle x_{\star}\rangle(t)
\cdot p^{\rho}(t)+g_{v}\langle x_{\star}\rangle(t) \cdot p^{\nabla}(t)\right).
\end{split}
\end{equation*}
Integrating both sides from $t = a$ to $t=T'$,
\begin{equation*}
\begin{split}
\int\limits_{a}^{T'}&\left[\int\limits_{t}^{T'}L_{z}[x_{\star},z_{\star}](\tau)\nabla \tau\int\limits_{a}^{t}
\left(g_{x}\langle x_{\star}\rangle(\tau)
\cdot p^{\rho}(\tau)+g_{v}\langle x_{\star}\rangle(\tau)
\cdot p^{\nabla}(\tau)\right)\nabla \tau\right]^{\nabla}\nabla t\\
&=-\int\limits_{a}^{T'}\left[L_{z}[x_{\star},z_{\star}](t)
\int\limits_{a}^{t}\left(g_{x}\langle x_{\star}\rangle(\tau) \cdot p^{\rho}(\tau)
+g_{v}\langle x_{\star}\rangle(\tau) \cdot p^{\nabla}(\tau)\right)\nabla\tau\right]\nabla t\\
&\qquad +\int\limits_{a}^{T'}\left[\int\limits_{\rho(t)}^{T'}
L_{z}[x_{\star},z_{\star}](\tau)\nabla\tau \left(g_{x}\langle x_{\star}\rangle(t)
\cdot p^{\rho}(t)+g_{v}\langle x_{\star}\rangle(t) \cdot p^{\nabla}(t)\right)\right]\nabla t.
\end{split}
\end{equation*}
The left-hand side of above equation is zero,
\begin{multline*}
\int\limits_{a}^{T'}\left[\int\limits_{t}^{T'}L_{z}[x_{\star},z_{\star}](\tau)\nabla\tau
\int\limits_{a}^{t} \left(g_{x}\langle x_{\star}\rangle(\tau) \cdot p^{\rho}(\tau)
+g_{v}\langle x_{\star}\rangle(\tau) \cdot p^{\nabla}(\tau)\right)\nabla \tau\right]^\nabla\nabla t\\
=\left.\int\limits_{t}^{T'}L_{z}[x_{\star},z_{\star}](\tau)\nabla\tau
\int\limits_{a}^{t}\left(g_{x}\langle x_{\star}\rangle(\tau) \cdot p^{\rho}(\tau)
+g_{v}\langle x_{\star}\rangle(\tau) \cdot p^{\nabla}(\tau)\right)\nabla\tau\right|^{t=T'}_{t=a}=0,
\end{multline*}
and, therefore,
\begin{equation}
\label{rownanie z g}
\begin{split}
\int\limits_{a}^{T'}&\left[L_{z}[x_{\star},z_{\star}](t)
\int\limits_{a}^{t}\left(g_{x}\langle x_{\star}\rangle(\tau) \cdot p^{\rho}(\tau)
+g_{v}\langle x_{\star}\rangle(\tau)
\cdot p^{\nabla}(\tau)\right)\nabla\tau\right]\nabla t\\
&=\int\limits_{a}^{T'}\left[\int\limits_{\rho(t)}^{T'}
L_{z}[x_{\star},z_{\star}](\tau)\nabla\tau
\left(g_{x}\langle x_{\star}\rangle(t)
\cdot p^{\rho}(t)+g_{v}\langle x_{\star}\rangle(t)
\cdot p^{\nabla}(t)\right)\right]\nabla t\\
&=\int\limits_{a}^{T'}\left[g_{x}\langle x_{\star}\rangle(t)
\cdot p^{\rho}(t) \int\limits_{\rho(t)}^{T'}
L_{z}[x_{\star},z_{\star}](\tau)\nabla \tau\right]\nabla t\\
&\qquad +\int\limits_{a}^{T'}\left[p^{\nabla}(t) \cdot g_{v}\langle x_{\star}\rangle(t)
\int\limits_{\rho(t)}^{T'}L_{z}[x_{\star},z_{\star}](\tau)\nabla\tau\right]\nabla t.
\end{split}
\end{equation}
Using point~3 of Theorem~\ref{tw:intprop} and the fact that $p(a)=0$,
\begin{multline*}
\int\limits_{a}^{T'} \left[p^{\nabla}(t) \cdot g_{v}\langle x_{\star}\rangle(t)
\int\limits_{\rho(t)}^{T'}
L_{z}[x_{\star},z_{\star}](\tau)\nabla \tau \right]\nabla t\\
=p(T') \cdot g_{v}\langle x_{\star}\rangle(T')
\int\limits_{\rho(T')}^{T'}L_{z}[x_{\star},z_{\star}](\tau)\nabla\tau\\
- \int\limits_{a}^{T'}\left(g_{v}\langle x_{\star}\rangle(t) \int\limits_{\rho(t)}^{T'}
L_{z}[x_{\star},z_{\star}](\tau)\nabla \tau\right)^{\nabla} \cdot p^{\rho}(t)\nabla t.
\end{multline*}
Then, from \eqref{eq:rownanie do zmieniania},
\begin{multline*}
\lim\limits_{T\to\infty}\inf\limits_{T'\geq T}\int\limits_{a}^{T'}
L_{x}[x_{\star},z_{\star}](t) \cdot p^{\rho}(t)\nabla t
+ L_{v}[x_{\star},z_{\star}](T') \cdot p(T')
- \int\limits_{a}^{T'} L_{v}^{\nabla}[x_{\star},z_{\star}](t) \cdot p^{\rho}(t)\nabla t\\
+ \int\limits_{a}^{T'} \left(\int\limits_{\rho(t)}^{T'}
L_{z}[x_{\star},z_{\star}](\tau)\nabla\tau \left(g_{x}\langle x_{\star}\rangle(t)
\cdot p^{\rho}(t) + g_{v}\langle x_{\star}\rangle(t)
\cdot p^{\nabla}(t)\right)\right)\nabla t\\
\end{multline*}
\begin{equation}
\label{eq:wynik rho}
\begin{split}
&=\lim\limits_{T\to\infty}\inf\limits_{T'\geq T}\int\limits_{a}^{T'}
L_{x}[x_{\star},z_{\star}](t) \cdot p^{\rho}(t)\nabla t
+ L_{v}[x_{\star},z_{\star}](T') \cdot p(T')\\
&\qquad -\int\limits_{a}^{T'}\left\{ L_{v}^{\nabla}[x_{\star},z_{\star}](t) \cdot p^{\rho}(t)
+ \int\limits_{\rho(t)}^{T'} L_{z}[x_{\star},z_{\star}](\tau)\nabla\tau
g_{x}\langle x_{\star}\rangle(t) \cdot p^{\rho}(t)\right\}\nabla t\\
&\qquad + g_{v}\langle x_{\star}\rangle(T')
\int\limits_{\rho(T')}^{T'}L_{z}[x_{\star},z_{\star}](\tau)\nabla\tau  \cdot p(T')\\
&\qquad -\int\limits_{a}^{T'}\left(g_{v}\langle x_{\star}\rangle(t)
\int\limits_{\rho(t)}^{T'}
L_{z}[x_{\star},z_{\star}](\tau)\nabla \tau \right)^{\nabla} \cdot p^{\rho}(t)\nabla t\\
&=\lim\limits_{T\to\infty}\inf\limits_{T'\geq T}\left\{\int\limits_{a}^{T'} p^{\rho}(t)
\cdot \Biggl[L_{x}[x_{\star},z_{\star}](t) - L_{v}^{\nabla}[x_{\star},z_{\star}](t)\right.\\
&\qquad \left.+ g_{x}\langle x_{\star}\rangle(t)
\int\limits_{\rho(t)}^{T'} L_{z}[x_{\star},z_{\star}](\tau)\nabla\tau
-\left(g_{v}\langle x_{\star}\rangle(t)
\int\limits_{\rho(t)}^{T'}L_{z}[x_{\star},z_{\star}](\tau)
\nabla \tau \right)^{\nabla}\Biggr]\nabla t\right.\\
&\qquad \left.+L_{v}[x_{\star},z_{\star}](T') \cdot p(T')
+\left(g_{v}\langle x_{\star}\rangle(T')
\int\limits_{\rho(T')}^{T'}L_{z}[x_{\star},z_{\star}](\tau)\nabla\tau\right) \cdot p(T')\right\}=0.
\end{split}
\end{equation}
We know that equation \eqref{eq:wynik rho} holds for all $p\in C^{1}_{ld}$ such that $p(a)=0$,
then, in particular, it also holds for the subclass of $p$ with $p(T')=0$. Therefore,
\begin{multline*}
\lim\limits_{T\to\infty}\inf\limits_{T'\geq T}\int\limits_{a}^{T'}
p^{\rho}(t) \cdot \left[L_{x}[x_{\star},z_{\star}](t)
-L_{v}^{\nabla}[x_{\star},z_{\star}](t) + g_{x}\langle x_{\star}\rangle(t)
\int\limits_{\rho(t)}^{T'}
L_{z}[x_{\star},z_{\star}](\tau)\nabla\tau \right.\\
\left.-\left(g_{v}\langle x_{\star}\rangle(t) \int\limits_{\rho(t)}^{T'}
L_{z}[x_{\star},z_{\star}](\tau)\nabla \tau \right)^{\nabla}\right]\nabla t=0.
\end{multline*}
Choosing $p=\left(p_{1},\ldots,p_{n}\right)$
such that $p_{2}\equiv\cdots\equiv p_{n}\equiv 0$,
\begin{multline*}
\lim\limits_{T\to\infty}\inf\limits_{T'\geq T}\int\limits_{a}^{T'}
p^{\rho}_{1}(t)\left[ L_{x_{1}}[x_{\star},z_{\star}](t)
+ g_{x_{1}}\langle x_{\star}\rangle(t)
\int\limits_{\rho(t)}^{T'} L_{z}[x_{\star},z_{\star}](\tau)\nabla\tau\right.\\
\left. -L_{v_{1}}^{\nabla}[x_{\star},z_{\star}](t)
-\left(g_{v_{1}}\langle x_{\star}\rangle(t) \int\limits_{\rho(t)}^{T'}
L_{z}[x_{\star},z_{\star}](\tau)\nabla \tau \right)^{\nabla}\right]\nabla t=0.
\end{multline*}
Using Lemma~\ref{lem:Dubois-Reymond rho},
\begin{multline*}
g_{x_{1}}\langle x_{\star}\rangle(t)
\int\limits_{\rho(t)}^{T'} L_{z}[x_{\star},z_{\star}](\tau)\nabla\tau
-\left(g_{v_{1}}\langle x_{\star}\rangle(t) \int\limits_{\rho(t)}^{T'}
L_{z}[x_{\star},z_{\star}](\tau)\nabla \tau \right)^{\nabla}\\
+ L_{x_{1}}[x_{\star},z_{\star}](t) - L_{v_{1}}^{\nabla}[x_{\star},z_{\star}](t) =0
\end{multline*}
for all $t\in [a,+\infty[$ and all $T' \ge t$. We can do the same for other coordinates.
For all $i=1,\ldots,n$ we obtain the equation
\begin{multline*}
g_{x_{i}}\langle x_{\star}\rangle(t)
\int\limits_{\rho(t)}^{T'} L_{z}[x_{\star},z_{\star}](\tau)\nabla\tau
-\left(g_{v_{i}}\langle x_{\star}\rangle(t) \int\limits_{\rho(t)}^{T'}
L_{z}[x_{\star},z_{\star}](\tau)\nabla \tau \right)^{\nabla}\\
+ L_{x_{i}}[x_{\star},z_{\star}](t)
-L_{v_{i}}^{\nabla}[x_{\star},z_{\star}](t)=0
\end{multline*}
for all $t\in [a,+\infty[$ and all $T' \ge t$.
These $n$ conditions can be written in vector form as
\begin{multline}
\label{eq:wynik rho po tw}
g_{x}\langle x_{\star}\rangle(t)
\int\limits_{\rho(t)}^{T'} L_{z}[x_{\star},z_{\star}](\tau)\nabla\tau
-\left(g_{v}\langle x_{\star}\rangle(t) \int\limits_{\rho(t)}^{T'}
L_{z}[x_{\star},z_{\star}](\tau)\nabla \tau\right)^{\nabla}\\
+L_{x}[x_{\star},z_{\star}](t) -L_{v}^{\nabla}[x_{\star},z_{\star}](t)=0
\end{multline}
for all $t\in [a,+\infty[$ and all $T' \ge t$, which implies
the Euler--Lagrange system of $n$ equations \eqref{eq:ELs:neq}.
From the system of equations \eqref{eq:wynik rho po tw}
and equation \eqref{eq:wynik rho}, we conclude that
\begin{equation}
\label{tran}
\lim\limits_{T\to\infty}\inf\limits_{T'\geq T}
\left\{ \left( L_{v}[x_{\star},z_{\star}](T')
+g_{v}\langle x_{\star}\rangle(T')
\int\limits_{\rho(T')}^{T'}L_{z}[x_{\star},z_{\star}](\tau)\nabla\tau\right)
\cdot p(T')\right\} =0.
\end{equation}
Next, we define a special curve $p$: for all $t\in [a,\infty[$
\begin{equation}
\label{curve p}
p(t)=\alpha(t)x_{\star}(t) \, ,
\end{equation}
where $\alpha:[a,\infty[\rightarrow\mathbb{R}$ is a $C_{ld}^{1}$
function satisfying $\alpha(a)=0$ and for which there exists
$T_{0}\in\mathbb{T}$ such that $\alpha(t)=\beta\in\mathbb{R} \setminus \lbrace 0\rbrace$
for all $t>T_{0}$. Substituting $p(T') = \alpha(T')x_{\star}(T')$
into \eqref{tran}, we conclude that
\begin{equation*}
\lim\limits_{T\to\infty}\inf\limits_{T'\geq T}\left\{
L_{v}[x_{\star},z_{\star}](T') \cdot \beta x_{\star}(T')
+g_{v}\langle x_{\star}\rangle(T')
\int\limits_{\rho(T')}^{T'}L_{z}[x_{\star},z_{\star}](\tau)\nabla\tau
\cdot \beta x_{\star}(T')\right\}
\end{equation*}
vanishes and, therefore,
$$
\lim\limits_{T\to\infty}\inf\limits_{T'\geq T}
\left\{x_{\star}(T') \cdot \left[L_{v}[x_{\star},z_{\star}](T')
+g_{v}\langle x_{\star}\rangle(T')\int\limits_{\rho(T')}^{T'}
L_{z}[x_{\star},z_{\star}](\tau)\nabla\tau\right]\right\} =0.
$$
From item~5 of Theorem~\ref{tw:intprop},
$x_{\star}$ satisfies the transversality
condition \eqref{eq:transCond}.
\end{proof}

In contrast with Theorem~\ref{maintheorem},
the following theorem is proved by manipulating equation
\eqref{eq:rownanie do zmieniania} differently:
using integration by parts and nabla differentiation formulas,
we transform the items which consist of $p^{\rho}$ into $p^\nabla$.
Thanks to that, we apply Corollary~\ref{cor:Dubois-Reymond nabla}
instead of Lemma~\ref{lem:Dubois-Reymond rho}
to obtain the intended conclusions.

\begin{theorem}
\label{secondmaintheorem}
Under assumptions of Theorem~\ref{maintheorem},
the Euler--Lagrange system of $n$ equations
\begin{multline}
\label{wynik nabla po tw}
\lim\limits_{T\to\infty}\inf\limits_{T'\geq T}\left\{
\int\limits_{t}^{T'} g_{x}\langle x_{\star}\rangle(\tau)
\int\limits_{\rho(\tau)}^{T'}
L_{z}[x_{\star},z_{\star}](s)\nabla s \nabla\tau
+g_{v}\langle x_{\star}\rangle(t) \int\limits_{\rho(t)}^{T'}
L_{z}[x_{\star},z_{\star}](\tau)\nabla \tau \right\}\\
+L_{v}[x_{\star},z_{\star}](t)
-\int\limits_{a}^{t} L_{x}[x_{\star},z_{\star}](\tau)\nabla\tau = c
\end{multline}
holds for all $t\in [a,\infty[$, $c\in\mathbb{R}^n$,
together with the transversality condition
\begin{equation}
\label{eq:transCond:2ndT}
\lim\limits_{T\to\infty}\inf\limits_{T'\geq T} \left\{
x_{\star}(T') \cdot
\int\limits_{a}^{T'} L_{x}[x_{\star},z_{\star}](\tau)\nabla\tau \right\} = 0.
\end{equation}
\end{theorem}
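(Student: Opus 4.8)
The plan is to start from exactly the same point as in Theorem~\ref{maintheorem}. Since the hypotheses are unchanged, the differentiability of $V$ at $0$ and the interchange of the limits $\varepsilon\to0$, $T\to\infty$ and $n\to\infty$ go through verbatim, so I may take as given both the first variation \eqref{eq:rownanie do zmieniania} and the auxiliary identity \eqref{rownanie z g}, which rewrites the $L_z$-term of \eqref{eq:rownanie do zmieniania} with the weight $\int_{\rho(t)}^{T'}L_z[x_\star,z_\star](\tau)\nabla\tau$. The whole idea is to dualize the manipulation of Theorem~\ref{maintheorem}: there every $p^\nabla$ was turned into $p^\rho$ and Lemma~\ref{lem:Dubois-Reymond rho} was applied; here I would instead turn the two terms carrying $p^\rho$ into terms carrying $p^\nabla$ and then apply Corollary~\ref{cor:Dubois-Reymond nabla}.

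Concretely, after \eqref{rownanie z g} the integrand of \eqref{eq:rownanie do zmieniania} consists of $L_x\cdot p^\rho$, $L_v\cdot p^\nabla$, $g_x\cdot p^\rho\int_{\rho(t)}^{T'}L_z\,\nabla\tau$ and $g_v\cdot p^\nabla\int_{\rho(t)}^{T'}L_z\,\nabla\tau$ (all evaluated at $[x_\star,z_\star]$ and $\langle x_\star\rangle$). The last two already carry $p^\nabla$ and I leave them alone. For $L_x\cdot p^\rho$ I write $L_x=(\int_a^t L_x\,\nabla\tau)^\nabla$ and integrate by parts via item~3 of Theorem~\ref{tw:intprop}; since the primitive vanishes at $a$ and $p(a)=0$, this yields the boundary term $p(T')\cdot\int_a^{T'}L_x\,\nabla\tau$ plus the integral $-\int_a^{T'}(\int_a^t L_x\,\nabla\tau)\cdot p^\nabla\,\nabla t$. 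For $g_x\cdot p^\rho\int_{\rho(t)}^{T'}L_z\,\nabla\tau$ I use instead the primitive that vanishes at $T'$, namely $-\int_t^{T'}g_x\langle x_\star\rangle(\tau)\int_{\rho(\tau)}^{T'}L_z\,\nabla s\,\nabla\tau$; with this choice the boundary contribution cancels and only the integral $+\int_a^{T'}(\int_t^{T'}g_x\int_{\rho(\tau)}^{T'}L_z\,\nabla s\,\nabla\tau)\cdot p^\nabla\,\nabla t$ remains. Collecting the four pieces, \eqref{eq:rownanie do zmieniania} becomes $\lim_{T\to\infty}\inf_{T'\ge T}\{p(T')\cdot\int_a^{T'}L_x\,\nabla\tau+\int_a^{T'}h(t)\cdot p^\nabla(t)\,\nabla t\}=0$, where $h(t)$ is exactly the quantity whose $\lim_{T\to\infty}\inf_{T'\ge T}$ appears on the left-hand side of \eqref{wynik nabla po tw}.

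From here the two conclusions follow as in Theorem~\ref{maintheorem}. Restricting to the subclass of variations with $p(T')=0$ removes the boundary term, and choosing $p$ supported in a single coordinate and applying Corollary~\ref{cor:Dubois-Reymond nabla} componentwise forces $h$ to be constant in $t$ for each $T'$; this is the Euler--Lagrange system \eqref{wynik nabla po tw} with $c\in\mathbb{R}^n$. For the transversality condition I would substitute $h\equiv c$ back into the collected identity: the interior integral then collapses, $\int_a^{T'}c\cdot p^\nabla\,\nabla t=c\cdot p(T')$, leaving a pure boundary expression in $p(T')$. Inserting the special curve \eqref{curve p}, $p(t)=\alpha(t)x_\star(t)$ with $\alpha(a)=0$ and $\alpha\equiv\beta\neq0$ for $t$ large, and dividing by $\beta$, should produce \eqref{eq:transCond:2ndT}.

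The step I expect to be the crux is the bookkeeping of this integration constant. In Theorem~\ref{maintheorem} the bracket multiplying $p^\rho$ was shown to vanish \emph{identically}, so on substitution the interior integral disappeared exactly and \eqref{eq:transCond} emerged with no residue. Here the analogous bracket equals a \emph{constant} $c$, so the collapse $\int_a^{T'}c\cdot p^\nabla=c\cdot p(T')$ deposits an extra term $c\cdot p(T')$; the delicate point is therefore to show that, after inserting \eqref{curve p}, this residue is consistent with the clean form \eqref{eq:transCond:2ndT} (for instance by checking compatibility with \eqref{wynik nabla po tw} evaluated at $t=T'$, where item~5 of Theorem~\ref{tw:intprop} turns $\int_{\rho(T')}^{T'}L_z$ into $\nu(T')L_z(T')$). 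Apart from this, the only care needed is the same as in Theorem~\ref{maintheorem}: the limit interchanges rest on hypotheses (1)--(3), and Corollary~\ref{cor:Dubois-Reymond nabla} must be read with $T'$ as a free parameter, so that the identity holds for every $t\in[a,+\infty[$ and every $T'\ge t$ before the outer $\lim\inf$ is taken.
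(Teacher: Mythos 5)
Your proposal retraces the paper's own proof almost line by line: it starts from \eqref{eq:rownanie do zmieniania} and \eqref{rownanie z g}, converts the two $p^{\rho}$-terms into $p^{\nabla}$-terms by exactly the two integrations by parts the paper uses (for $L_{x}$ the primitive vanishing at $a$, for the $g_{x}$-term the primitive vanishing at $T'$, so that no boundary contribution survives), arrives at the paper's identity \eqref{wynik nabla}, restricts to variations with $p(T')=0$, applies Corollary~\ref{cor:Dubois-Reymond nabla} coordinatewise with $T'$ treated as a free parameter (this is the paper's \eqref{eq:lmlm}), and finally inserts the special curve \eqref{curve p}. As far as the Euler--Lagrange system \eqref{wynik nabla po tw} is concerned, your argument is the paper's argument and it is complete.

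The ``delicate point'' you isolate for the transversality condition is, however, a genuine gap---and you should know that the paper does not actually resolve it: it passes from \eqref{wynik nabla} and \eqref{eq:lmlm} to \eqref{tran dwa} by silently discarding precisely the residue you identified. Substituting \eqref{eq:lmlm} into \eqref{wynik nabla} honestly gives
\begin{equation*}
\lim_{T\to\infty}\,\inf_{T'\geq T}\left\{p(T')\cdot\left[c+\int_{a}^{T'}L_{x}[x_{\star},z_{\star}](\tau)\nabla\tau\right]\right\}=0,
\end{equation*}
and since the limit inferior does not split across sums, the term $c\cdot p(T')$ cannot simply be dropped. Moreover, the compatibility check you suggest confirms consistency but does not produce the stated condition: evaluating \eqref{eq:lmlm} at $t=T'$ and using item~5 of Theorem~\ref{tw:intprop} yields
\begin{equation*}
c+\int_{a}^{T'}L_{x}[x_{\star},z_{\star}](\tau)\nabla\tau
= L_{v}[x_{\star},z_{\star}](T')
+ g_{v}\langle x_{\star}\rangle(T')\,\nu(T')\,L_{z}[x_{\star},z_{\star}](T'),
\end{equation*}
so that after inserting \eqref{curve p} the displayed identity collapses to the transversality condition \eqref{eq:transCond} of Theorem~\ref{maintheorem}---which holds anyway under the standing assumptions and carries no new information---rather than to \eqref{eq:transCond:2ndT}. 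To reach \eqref{eq:transCond:2ndT} one needs an additional ingredient controlling $c\cdot x_{\star}(T')$ inside the limit inferior (for instance $c=0$, or convergence of $x_{\star}(T')$ together with a justification for splitting the $\liminf$); neither your proposal nor the published proof supplies it. In short: your route is the paper's route, your Euler--Lagrange derivation is sound, and the crux you flagged is exactly the spot where the paper's own argument is also incomplete.
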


\begin{proof}
We use the necessary optimality condition \eqref{eq:rownanie do zmieniania}
found in the proof of Theorem~\ref{maintheorem}.
Using point~3 of Theorem~\ref{tw:differprop},
\begin{equation*}
\begin{split}
\Biggl[p(t) & \cdot \int\limits_{a}^{t}L_{x}[x_{\star},z_{\star}](\tau)\nabla \tau \Biggr]^{\nabla}\\
&= p^{\nabla}(t) \cdot \int\limits_{a}^{t} L_{x}[x_{\star},z_{\star}](\tau)\nabla \tau
+p^{\rho}(t) \cdot
\left[\int\limits_{a}^{t} L_{x}[x_{\star},z_{\star}](\tau)\nabla\tau\right]^{\nabla}\\
&=p^{\nabla}(t) \cdot \int\limits_{a}^{t}L_{x}[x_{\star},z_{\star}](\tau)\nabla\tau
+p^{\rho}(t) \cdot L_{x}[x_{\star},z_{\star}](t).
\end{split}
\end{equation*}
Then, integrating both sides from $t = a$ to $t = T'$,
\begin{multline*}
\int\limits_{a}^{T'}\left[p(t) \cdot
\int\limits_{a}^{t}
L_{x}[x_{\star},z_{\star}](\tau)\nabla \tau \right]^{\nabla}\nabla t\\
=\int\limits_{a}^{T'} \left(p^{\nabla}(t) \cdot \int\limits_{a}^{t}
L_{x}[x_{\star},z_{\star}](\tau)\nabla \tau\right) \nabla t
+\int\limits_{a}^{T'} p^{\rho}(t) \cdot L_{x}[x_{\star},z_{\star}](t)\nabla t.
\end{multline*}
Therefore,
\begin{multline*}
\left. p(t) \cdot \int\limits_{a}^{t}
L_{x}[x_{\star},z_{\star}](\tau)\nabla \tau \right|_{t=a}^{t=T'}\\
= \int\limits_{a}^{T'} \left(p^{\nabla}(t) \cdot \int\limits_{a}^{t}
L_{x}[x_{\star},z_{\star}](\tau)\nabla \tau\right)\nabla t
+\int\limits_{a}^{T'} p^{\rho}(t) \cdot L_{x}[x_{\star},z_{\star}](t)\nabla t.
\end{multline*}
Since $p(a)=0$, we obtain that
$\displaystyle p(T') \cdot \int\limits_{a}^{T'} L_{x}[x_{\star},z_{\star}](\tau)\nabla\tau$
is equal to
\begin{equation*}
\int\limits_{a}^{T'} p^{\nabla}(t) \cdot
\left(\int\limits_{a}^{t} L_{x}[x_{\star},z_{\star}](\tau)\nabla \tau\right)\nabla t
+\int\limits_{a}^{T'} p^{\rho}(t) \cdot L_{x}[x_{\star},z_{\star}](t)\nabla t,
\end{equation*}
that is,
\begin{multline*}
\int\limits_{a}^{T'} p^{\rho}(t) \cdot L_{x}[x_{\star},z_{\star}](t)\nabla t\\
=-\int\limits_{a}^{T'} p^{\nabla}(t) \cdot
\left(\int\limits_{a}^{t} L_{x}[x_{\star},z_{\star}](\tau)\nabla \tau\right)\nabla t
+ p(T') \cdot \int\limits_{a}^{T'} L_{x}[x_{\star},z_{\star}](\tau)\nabla\tau.
\end{multline*}
Making the same calculations as in the proof of Theorem~\ref{maintheorem},
we obtain \eqref{rownanie z g}. Using again
point~3 of Theorem~\ref{tw:differprop},
\begin{equation*}
\begin{split}
\Biggl[p(t) \cdot & \int\limits_{t}^{T'}\left(g_{x}\langle x_{\star}\rangle(\tau)
\int\limits_{\rho(\tau)}^{T'}
L_{z}[x_{\star},z_{\star}](s)\nabla s\right)\nabla\tau\Biggr]^{\nabla}\\
&=p^{\nabla}(t) \cdot \int\limits_{t}^{T'}\left(g_{x}\langle x_{\star}\rangle(\tau)
\int\limits_{\rho(\tau)}^{T'}L_{z}[x_{\star},z_{\star}](s)\nabla s\right)\nabla\tau\\
&\qquad + p^{\rho}(t) \cdot \left[\int\limits_{t}^{T'}\left(
g_{x}\langle x_{\star}\rangle(\tau)\int\limits_{\rho(\tau)}^{T'}
L_{z}[x_{\star},z_{\star}](s)\nabla s\right)\nabla\tau\right]^{\nabla}\\
&= p^{\nabla}(t) \cdot \int\limits_{t}^{T'}\left(g_{x}\langle x_{\star}\rangle(\tau)
\int\limits_{\rho(\tau)}^{T'}L_{z}[x_{\star},z_{\star}](s)\nabla s\right)\nabla\tau\\
& \qquad
- p^{\rho}(t) \cdot g_{x}\langle x_{\star}\rangle(t)
\int\limits_{\rho(t)}^{T'}L_{z}[x_{\star},z_{\star}](\tau)\nabla\tau.
\end{split}
\end{equation*}
Integrating both sides from $t = a$ to $t = T'$,
and because of point~2 of Theorem~\ref{tw:intprop} and $p(a)=0$,
\begin{multline*}
\int\limits_{a}^{T'}\left[p(t) \cdot
\int\limits_{t}^{T'}\left(g_{x}\langle x_{\star}\rangle(\tau)
\int\limits_{\rho(\tau)}^{T'}
L_{z}[x_{\star},z_{\star}](s)\nabla s\right)\nabla\tau\right]^{\nabla}\nabla t\\
=\left. p(t) \cdot \int\limits_{t}^{T'}\left(g_{x}\langle x_{\star}\rangle(\tau)
\int\limits_{\rho(\tau)}^{T'}
L_{z}[x_{\star},z_{\star}](s)\nabla s\right)\nabla \tau\right|^{t=T'}_{t=a}=0.
\end{multline*}
Then,
\begin{multline*}
\int\limits_{a}^{T'} p^{\rho}(t) \cdot
\left(g_{x}\langle x_{\star}\rangle(t)
\int\limits_{\rho(t)}^{T'} L_{z}[x_{\star},z_{\star}](s)\nabla s\right)\nabla t\\
=\int\limits_{a}^{T'} p^{\nabla}(t) \cdot \left[
\int\limits_{t}^{T'}\left(g_{x}\langle x_{\star}\rangle(\tau)
\int\limits_{\rho(\tau)}^{T'}
L_{z}[x_{\star},z_{\star}](s)\nabla s\right)\nabla\tau\right] \nabla t.
\end{multline*}
From \eqref{rownanie z g} and previous relations,
we write \eqref{eq:rownanie do zmieniania} in the following way:
\begin{equation}
\label{wynik nabla}
\begin{split}
&\lim\limits_{T\to\infty}\inf\limits_{T'\geq T}\left\{
-\int\limits_{a}^{T'} p^{\nabla}(t) \cdot \int\limits_{a}^{t}
L_{x}[x_{\star},z_{\star}](\tau)\nabla\tau\nabla t
+ p(T') \cdot \int\limits_{a}^{T'}
L_{x}[x_{\star},z_{\star}](\tau)\nabla\tau \right.\\
&\ \left.+\int\limits_{a}^{T'}
L_{v}[x_{\star},z_{\star}](t) \cdot p^{\nabla}(t)\nabla t
+\int\limits_{a}^{T'} p^{\nabla}(t) \cdot \int\limits_{t}^{T'}\left(
g_{x}\langle x_{\star}\rangle(\tau)\int\limits_{\rho(\tau)}^{T'}
L_{z}[x_{\star},z_{\star}](s)\nabla s\right)\nabla\tau\nabla t\right.\\
&\ \left.+\int\limits_{a}^{T'}g_{v}\langle x_{\star}\rangle(t)
\cdot \left(p^{\nabla}(t)\int\limits_{\rho(t)}^{T'}
L_{z}[x_{\star},z_{\star}](\tau)\nabla \tau\right)\nabla t\right\}\\
&=\lim\limits_{T\to\infty}\inf\limits_{T'\geq T}\left\{
\int\limits_{a}^{T'} p^{\nabla}(t) \cdot \right.\left[
\int\limits_{a}^{t}- L_{x}[x_{\star},z_{\star}](\tau)\nabla\tau
+ L_{v}[x_{\star},z_{\star}](t)\right.\\
&\qquad \left.+\int\limits_{t}^{T'}\left(g_{x}\langle x_{\star}\rangle(\tau)
\int\limits_{\rho(\tau)}^{T'}
L_{z}[x_{\star},z_{\star}](s)\nabla s\right)\nabla\tau\right.\\
&\qquad \left.
+g_{v}\langle x_{\star}\rangle(t)
\int\limits_{\rho(t)}^{T'}L_{z}[x_{\star},z_{\star}](\tau)\nabla \tau
\right]\nabla t
\left. + p(T') \cdot
\int\limits_{a}^{T'} L_{x}[x_{\star},z_{\star}](\tau)\nabla\tau
\right\}=0.
\end{split}
\end{equation}
Because \eqref{wynik nabla} holds for all $p\in C_{ld}$
with $p(a)=0$, in particular it also holds
in the subclass of functions $p\in C_{ld}$
with $p(a)= p(T')=0$. Let $i \in \{1,\ldots,n\}$.
Choosing $p=\left(p_{1},\ldots,p_{n}\right)$
such that all $p_{j}\equiv 0$, $j \ne i$,
and $p_{i} \in C_{ld}$ with $p_i(a)= p_i(T')=0$,
we conclude that
\begin{multline*}
\lim\limits_{T\to\infty}\inf\limits_{T'\geq T}
\int\limits_{a}^{T'} p^{\nabla}_{i}(t) \left\{
\int\limits_{a}^{t}-L_{x_{i}}[x_{\star},z_{\star}](\tau)\nabla\tau
+L_{v_{i}}[x_{\star},z_{\star}](t)\right.\\
\left.+\int\limits_{t}^{T'}
g_{x_{i}}\langle x_{\star}\rangle(\tau)
\int\limits_{\rho(\tau)}^{T'}
L_{z}[x_{\star},z_{\star}](s)\nabla s
\nabla\tau
+ g_{v_{i}}\langle x_{\star}\rangle(t) \int\limits_{\rho(t)}^{T'}
L_{z}[x_{\star},z_{\star}](\tau)\nabla \tau\right\}\nabla t = 0.
\end{multline*}
From Corollary~\ref{cor:Dubois-Reymond nabla}
we obtain the equations
\begin{multline}
\label{eq:lmlm}
L_{v_{i}}[x_{\star},z_{\star}](t)
- \int\limits_{a}^{t}
L_{x_{i}}[x_{\star},z_{\star}](\tau)\nabla\tau
+\int\limits_{t}^{T'}\left(g_{x_{i}}\langle x_{\star}\rangle(\tau)
\int\limits_{\rho(\tau)}^{T'}
L_{z}[x_{\star},z_{\star}](s)\nabla s\right)\nabla\tau\\
+g_{v_{i}}\langle x_{\star}\rangle(t) \int\limits_{\rho(t)}^{T'}
L_{z}[x_{\star},z_{\star}](\tau)\nabla \tau =c_i,
\end{multline}
$c_i\in\mathbb{R}$, $i=1,\ldots,n$,
for all $t\in [a,+\infty[$ and all $T' \ge t$.
These $n$ conditions imply the
Euler--Lagrange system of equations \eqref{wynik nabla po tw}.
From \eqref{wynik nabla} and \eqref{eq:lmlm}, we conclude that
\begin{equation}
\label{tran dwa}
\lim\limits_{T\to\infty}\inf\limits_{T'\geq T}
\left\{ p(T') \cdot
\int\limits_{a}^{T'} L_{x}[x_{\star},z_{\star}](\tau)\nabla\tau \right\}= 0.
\end{equation}
Using the special curve $p$ defined by \eqref{curve p},
we obtain from equation \eqref{tran dwa} that
$$
\lim\limits_{T\to\infty}\inf\limits_{T'\geq T}
\left\{ \beta x_{\star}(T') \cdot \int\limits_{a}^{T'}
L_{x}[x_{\star},z_{\star}](\tau)\nabla\tau\right\} = 0.
$$
Therefore, $x_{\star}$ satisfies the transversality
condition \eqref{eq:transCond:2ndT}.
\end{proof}


\section*{Acknowledgements}

Work supported by {\it FEDER} funds through
{\it COMPETE} --- Operational Programme Factors of Competitiveness
(``Programa Operacional Factores de Competitividade'')
and by Portuguese funds through the
{\it Center for Research and Development
in Mathematics and Applications} (University of Aveiro)
and the Portuguese Foundation for Science and Technology
(``FCT --- Funda\c{c}\~{a}o para a Ci\^{e}ncia e a Tecnologia''),
within project PEst-C/MAT/UI4106/2011
with COMPETE number FCOMP-01-0124-FEDER-022690.
Dryl was also supported by FCT through the Ph.D. fellowship
SFRH/BD/51163/2010; Torres by FCT through the project PTDC/MAT/113470/2009.
The authors are grateful to N. Martins for helpful suggestions
and for a detailed reading of a preliminary version of the paper.



\medskip

Received December 2011; revised October 2012; accepted December 2012.

\medskip



\begin{thebibliography}{99}

\bibitem{BohnerDEOTS} (MR1843232)
\newblock M. Bohner\ and\ A. Peterson,
\newblock ``Dynamic equations on time scales,"
\newblock Birkh\"auser Boston, Boston, MA, 2001.

\bibitem{MBbook2001} (MR1962542)
\newblock M. Bohner\ and\ A. Peterson,
\newblock ``Advances in dynamic equations on time scales,"
\newblock Birkh\"auser Boston, Boston, MA, 2003.

\bibitem{cc:dual} (MR2771298)
\newblock M. C. Caputo,
\newblock \emph{Time scales: from nabla calculus to delta calculus
and vice versa via duality},
\newblock Int. J. Difference Equ. {\bf 5} (2010), no.~1, 25--40.
{\tt arXiv:0910.0085}

\bibitem{LangUnderAnalysis} (MR1476913)
\newblock S. Lang,
\newblock ``Undergraduate analysis,"
\newblock 2$^{nd}$ edition, Undergraduate Texts in Mathematics,
Springer, New York, 1997.

\bibitem{book:Leitmann} (MR0641031)
\newblock G. Leitmann,
\newblock ``The calculus of variations and optimal control,"
\newblock Mathematical Concepts and Methods in Science and Engineering,
24, Plenum, New York, 1981.

\bibitem{AM-NM-DT} (MR2747272)
\newblock A. B. Malinowska, N. Martins\ and\ D. F. M. Torres,
\newblock \emph{Transversality conditions for infinite horizon variational problems on time scales},
\newblock Optim. Lett. {\bf 5} (2011), no.~1, 41--53.
{\tt arXiv:1003.3931}

\bibitem{MyID:138} (MR2604248)
\newblock A. B. Malinowska\ and\ D. F. M. Torres,
\newblock \emph{Strong minimizers of the calculus of variations
on time scales and the Weierstrass condition},
\newblock Proc. Est. Acad. Sci. {\bf 58} (2009), no.~4, 205--212.
{\tt arXiv:0905.1870}

\bibitem{LM:ts} (MR2727155)
\newblock A. B. Malinowska\ and\ D. F. M. Torres,
\newblock \emph{Leitmann's direct method of optimization for absolute extrema
of certain problems of the calculus of variations on time scales},
\newblock Appl. Math. Comput. {\bf 217} (2010), no.~3, 1158--1162.
{\tt arXiv:1001.1455}

\bibitem{comBasia:duality} (MR2836039)
\newblock A. B. Malinowska\ and\ D. F. M. Torres,
\newblock \emph{A general backwards calculus of variations via duality},
\newblock Optim. Lett. {\bf 5} (2011), no.~4, 587--599.
{\tt arXiv:1007.1679}

\bibitem{comNaty:AML} (MR2718525)
\newblock N. Martins\ and\ D. F. M. Torres,
\newblock \emph{Noether's symmetry theorem for nabla problems of the calculus of variations},
\newblock Appl. Math. Lett. {\bf 23} (2010), no.~12, 1432--1438.
{\tt arXiv:1007.5178}

\bibitem{Generalizing_the_variational_theory} (MR2794990)
\newblock N. Martins\ and\ D. F. M. Torres,
\newblock \emph{Generalizing the variational theory on time scales to include the delta indefinite integral},
\newblock Comput. Math. Appl. {\bf 61} (2011), no.~9, 2424--2435.
{\tt arXiv:1102.3727}

\bibitem{naty:irlanda} (MR2966852)
\newblock N. Martins\ and\ D. F. M. Torres,
\newblock \emph{Higher-order infinite horizon variational problems in discrete quantum calculus},
\newblock Comput. Math. Appl. {\bf 64} (2012), no.~7, 2166--2175.
\newblock DOI:10.1016/j.camwa.2011.12.006
{\tt arXiv:1112.0787}

\bibitem{TorresDeltaNabla}
\newblock D. F. M. Torres,
\newblock \emph{The variational calculus on time scales},
\newblock Int. J. Simul. Multidisci. Des. Optim. {\bf 4} (2010), no.~1, 11--25.
\newblock DOI:10.1051/ijsmdo/2010003
{\tt arXiv:1106.3597}

\end{thebibliography}
\end{document}